\newtheorem{thm}{\bf Theorem}[section]
\newtheorem{prop}[thm]{\bf Proposition}
\newtheorem{cor}[thm]{\bf Corollary}
\newtheorem{defin}[thm]{\textsl{\bf Definition}{}}
\newtheorem{lem}[thm]{\bf Lemma}
\newtheorem{ex}{\bf Example}[section]
\newtheorem{rem}{\bf Remark}[section]
\newtheorem{rems}[rem]{\bf Remarks}
\renewcommand{\le}{\leqslant}
\renewcommand{\ge}{\geqslant}  
\newcommand{\lec}{\preccurlyeq}
\newcommand{\gec}{\succcurlyeq}
 \newcommand{\norm}[1]{\left\lVert #1 \right\rVert}  
 \newcommand{\interior}[1]{\mathring{#1}}
\newcommand\eps{\varepsilon}
\newcommand\e{\mathrm{e}}
\newcommand\dd{\mathrm{d}}
\newcommand\lb{[\![}
\newcommand\rb{]\!]}
\newcommand\T{\mathcal{T}}
\newcommand\1{\mathbf{1}}
\newcommand\Glm{\mathcal{G}_{\le m}}
\newcommand\Gm[1]{\mathcal{G}_{m_{#1}}}
\newcommand\Gj{\mathcal{G}_{j}}
\newcommand\R{\mathbb{R}}
\newcommand\E{\mathbb{E}}
\newcommand\N{\mathbb{N}}
\newcommand\D[1]{\mathrm{Desc}(#1)}
\newcommand\C[1]{\mathrm{Child}(#1)}
\newcommand\W{\mathsf{W}}
\newcommand\sm{s_\mathrm{max}}
\newcommand\fs{\frak{s}}
\newcommand\fe{\frak{e}}
\newcommand\vs{\sigma}
\renewcommand\ln{\mathrm{Log}}
\newcommand\Pno{\mathbb{P}_{n,0}}
\newcommand\Eno{\mathbb{E}_{n,0}}
\newcommand\Pnu{\mathbb{P}_{n,u}}
\begin{document}

\begin{frontmatter}

\title{Minimax rates for finite mixture estimation}
\runtitle{Minimax rates for finite mixture estimation}


\author{\fnms{Philippe} \snm{Heinrich}\ead[label=e1]{philippe.heinrich@math.univ-lille1.fr}}
\address{\printead{e1}}
\and
\author{\fnms{Jonas} \snm{Kahn}\corref{JK}\ead[label=e2]{jonas.kahn@math.univ-lille1.fr}}
\address{\printead{e2}}
\affiliation{Universit\'e Lille 1\\ Laboratoire Paul Painlev\'e B\^at. M2\\
  Cit\'e Scientifique \\59655 Villeneuve d'Ascq,  FRANCE}

\runauthor{P. Heinrich and J. Kahn}

\begin{abstract}
 \ We prove that under some regularity and strong identifiability
 conditions, around a mixing distribution with $m_0$ components, the optimal local minimax rate
 of  estimation  of a mixture with $m$ components is $n^{-1/(4(m-m_0) + 2)}$.  This corrects a
 previous paper  by   \citet{Chen} in The Annals of
 Statistics. 
\end{abstract}

\begin{keyword}[class=MSC]
\kwd[Primary ]{62G05}
\kwd[; secondary ]{62G20.}
\end{keyword}

\begin{keyword}
\kwd{Local asymptotic normality, convergence of experiments, maximum
  likelihood estimate, Wasserstein metric, mixing distribution,
  mixture model, rate of convergence, strong identifiability.}
\end{keyword}

\end{frontmatter}

\section{Introduction}
Let be $\left\{ f(x,\theta) \right\}_{\theta \in \Theta }$ be a family
of probability densities with respect to some $\sigma$-finite measure $\lambda$. The
parameter set $\Theta $ is always assumed to be a compact subset of
$\R$ with non-empty interior. A finite mixture model with $m$
components is given by 
\begin{equation}
  \label{fxG}
  f(x,G)=\int_{\Theta}f(x,\theta)\dd G(\theta)
\end{equation}
where $G$ is a $m$-points support distribution on $\Theta$, called the \emph{mixing distribution}. The class
of such $m$-mixing distributions $G$ is denoted by $\Gm{}$ and $\Glm$
will be the union of $\Gj$ for $j\in \lb 1,m\rb$. 

In Section~\ref{sec:lowerbound} we will show that a consistent estimator $\widehat{G}_n\in \Glm$
of an unknown mixing distribution $G_1$ can not converge uniformly faster than $n^{-1/(4(m-m_0) + 2)}$ in the neighborhood of $G_0\in\Gm{0}$,   in
the ($L^1$-)Wasserstein metric,  
 where $n$ is the sample
size. Recall that this metric can be defined by 
\begin{equation}
  \label{defW}
  W(G_1,G_2)=\int_{\R}|G_1(-\infty,t]-G_2(-\infty,t]|\dd t,
\end{equation}
and that by the Kantorovich-Rubinstein dual representation, 
\begin{equation}
  \label{dualW}
   W(G_1,G_2)=\sup_{|f|_{\mathrm{Lip}}\le 1}\int_{\Theta}f(\theta)\dd (G_1-G_2)(\theta).
\end{equation}
In Section~\ref{sec:upperbound}, we prove that the rate
$n^{-1/(4(m-m_0) + 2)}$ is optimal, under strong identifiability conditions.  Finally, Section~\ref{sec:class} exhibits natural families satisfying these strong identifiability conditions.

Some auxiliary or too long computations are
postponed to Appendix~\ref{app}.

\section{The optimal rate can not be better than $n^{-1/(4(m-m_0) + 2)}$}
\label{sec:lowerbound}

The main idea is to build families of mixing distributions $G_{n}(u)$ with the same $2(m - m_0) $ first moments, and $u n^{-1/2}$ as rescaled shifted $(2 (m - m_0) + 1)$-th moment. Hence the Wasserstein distance between $G_n(u_1)$ and $G_n(u_2)$ will be of order $n^{-1/(4(m-m_0) + 2)}$. They will need $n$ observations to be told apart. Theorem \ref{LAN} makes this precise. We first need a few tools.

We give a far-from-general definition of local asymptotic normality \citep{LeCam}, but it is sufficient for our purposes.
\begin{defin}
    \label{defLAN} Given densities $f_{n,u}$ with respect to a measure
    $\lambda$, consider the sequence of experiments $\mathcal{E} _n =\left\{ f_{n,u}, u\in \mathcal{U} _n \right\} $ with each point of $\mathbb{R} $ in $\mathcal{U}_n$ for $n$ large enough. Let $X$ have density $f_{n,0}$ and consider the log-likelihood ratios:
    \begin{align*}
        Z_{n, 0}(u) & = \ln \left( \frac{f_{n, u}(X)}{f_{n,0}(X)}  \right) .
\end{align*}
    Suppose that there is a positive constant $\Gamma $ and a sequence of random variables $Z_n$ with $Z_n \xrightarrow[]{d} \mathcal{N} (0, \Gamma )$, such that for all $u\in \mathbb{R} $:
    \begin{align}
    \label{ELAN}
    Z_{n, 0} (u) - u Z_n + \frac{u^2}{2} \Gamma & \xrightarrow[n\to\infty]{P}  0
\end{align}
The sequence of experiments is said \emph{locally asymptotically
  normal} (LAN) and \emph{converging} to the Gaussian shift experiment $\left\{ \mathcal{N} (u\Gamma, \Gamma ), u \in \mathbb{R}  \right\} $.
\end{defin}

Of course, here  $\xrightarrow[]{d}$ (resp. $\xrightarrow[]{P}$) stands for convergence in distribution (resp. in probability). Intuitively, (almost) anything that can be done in a Gaussian shift experiment can be done asymptotically in a locally asymptotically normal sequence of experiments. 

\begin{defin}
    \label{Eia}
Let $\left\{ f(x, \theta ) \right\}_{\theta\in\Theta} $ be a family of
densities with respect to a $\sigma$-finite measure $\lambda $.
    Let us consider, for $p\in \N$ and $q>0$, the functions:
    \begin{align}
        \label{Eiaeq}
        E_{p,q} : \qquad \Theta ^3 \quad &  \to [0,\infty] \nonumber\\
        \left(\theta _1, \theta _2, \theta _3\right) &  \mapsto \mathbb{E}_{\theta _1}\left|\frac{ f^{(p)}(x, \theta _2)}{f(x,\theta _3)} \right|^q .  
    \end{align}
    We say that the family of densities is $(p,q )$-smooth if $E_{p,q
    }$ is well-defined and continuous on $\Theta^3$, and if there exists
    $\varepsilon > 0$ such that for all $\theta_1$,
    \begin{align}
\label{proche}
        |\theta _2 - \theta _3| < \varepsilon & \implies E_{p,q }(\theta _1, \theta _2, \theta _3) < \infty.
    \end{align}
    \end{defin}

    \begin{ex}
        Let us consider an exponential family with natural parameter
        $\theta\in \Theta _0 $, so that $f(x, \theta ) = h(x) g(\theta
        ) \exp(\theta T(x))$, with $g \in C^{\infty}$. Consider
        $\Theta $ such that its $\varepsilon $-neighbourhood $\Theta
        \oplus B(0, \varepsilon )$ is included in $\Theta_0 $. Then
        $\left\{ f(x, \theta), \theta \in \Theta  \right\} $ is $(p,q
        )$-smooth for any $p$ and $q$. Indeed,
        \begin{eqnarray*}
          f^{(p)}(x, \theta _2) & = &
                                      h(x)\e^{\theta_2T(x)}\left[\sum_{k=0}^p\binom{p}{k}g^{(k)}(\theta_2)T^{p-k}(x)\right]\\
\frac{ f^{(p)}(x, \theta _2)}{ f(x, \theta _3)}& = &
                                      \frac{\e^{(\theta_2-\theta_3)T(x)}}{g(\theta_3)}\left[\sum_{k=0}^p\binom{p}{k}g^{(k)}(\theta_2)T^{p-k}(x)\right]\\
\left|\frac{ f^{(p)}(x, \theta _2)}{ f(x, \theta _3)}\right|^q& = &
                                       a\frac{\e^{q(\theta_2-\theta_3)T(x)}}{g^q(\theta_3)}\left|\sum_{k=0}^p\binom{p}{k}g^{(k)}(\theta_2)T^{p-k}(x)\right|^q
\end{eqnarray*}
so that 
\[E_{p,q}(\theta _1, \theta _2, \theta _3)  = \frac{g(\theta_1) \E_{\theta_1+q(\theta_2-\theta_3)}\left|\sum_{k=0}^p\binom{p}{k}g^{(k)}(\theta_2)T^{p-k}(x)\right|^q}{g^q(\theta_3)g(\theta_1+q(\theta_2-\theta_3))}.\]
        Since all the moments of the sufficient statistic $T(x)$ are finite under a distribution in the exponential family, and since $\theta _1 + q\theta _2 - q \theta _3$ is in $\Theta _0$ for $ (\theta _2 -  \theta _3) < \eps/q$, we have finiteness of $E_{p,q } (\theta _1, \theta _2, \theta _3) $. Continuity is clear.
    \end{ex}

   
 Being $(p, q )$-smooth ensures finiteness of similar integrals when some $\theta _j$ are replaced with mixing distributions with components close to the $\theta _j$:
    \begin{prop}
        \label{tversmix}
        Given $\pi_0>0$ and two positive integers $m_0\le m$, define
        mixing distributions
 \[G_n = \sum_{j=1}^m \pi_{j,n} \delta _{\theta _{j,n}}\] 
such that $\theta _{j,n}\to \theta _0$ for all $ j \in\lb m_0,m\rb$ and $\sum_{j=
          m_0}^m \pi_{j,n} \geq \pi_0$ for all $n$ large enough. 
Consider a $(p, q)$-smooth family of densities
$\{f(x,\theta)\}_{\theta\in\Theta} $ with respect to some $\sigma$-finite measure $\lambda$. 

Then there is a finite $C$ depending only on  $\theta _0$ and $\pi_0$ such that for any $\theta$ satisfying  $\left|\theta - \theta _0 \right| \le \eps / 2$, for $n$ large enough, for any mixture $f(x,G)$:
        \begin{align*}
            \mathbb{E}_{G}\left|\frac{f^{(p)}(x, \theta)}{f(x,G_n)} \right|^q & \le C.
        \end{align*}
If, in addition, the function $\left|f^{(p)}(x, \theta _0)\right|$ has
nonzero integral under $\lambda$, then there is a positive $c$ depending only on  $\theta _0$ such that   for any mixture $f(x,G)$:
\begin{align*}
    \mathbb{E}_{G}\left|\frac{f^{(p)}(x, \theta _0)}{f(x,G)} \right|^q & \geq c.
\end{align*}
    \end{prop}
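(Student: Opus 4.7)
The plan is to reduce both inequalities to the definition of $(p,q)$-smoothness by controlling the denominator $f(x, G_n)$ (resp.\ $f(x,G)$) through a Jensen-type inequality applied to the mixing distribution.

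\textbf{Upper bound.} I first discard the components of $G_n$ that are not close to $\theta_0$. Setting $S_n = \sum_{j=m_0}^m \pi_{j,n} \ge \pi_0$ and $\widetilde\pi_{j,n} = \pi_{j,n}/S_n$ gives convex weights summing to one; since $y \mapsto y^{-q}$ is convex on $(0,\infty)$ for $q>0$, Jensen's inequality yields
\begin{equation*}
\frac{1}{f(x,G_n)^q} \le \frac{1}{\pi_0^q} \Big(\sum_{j=m_0}^m \widetilde\pi_{j,n} f(x,\theta_{j,n})\Big)^{-q} \le \frac{1}{\pi_0^q} \sum_{j=m_0}^m \widetilde\pi_{j,n}\, f(x,\theta_{j,n})^{-q}.
\end{equation*}
Multiplying by $|f^{(p)}(x,\theta)|^q$ and integrating against $f(x,G)\,\dd\lambda$ turns each summand into $\int_\Theta E_{p,q}(\theta',\theta,\theta_{j,n})\,\dd G(\theta')$. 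Since $\theta_{j,n}\to\theta_0$ for $j\ge m_0$ and $|\theta-\theta_0|\le\eps/2$, we have $|\theta-\theta_{j,n}|<\eps$ for $n$ large, so by $(p,q)$-smoothness $E_{p,q}$ is finite and continuous on the compact set $\Theta\times\overline{B}(\theta_0,\eps/2)\times\overline{B}(\theta_0,\eps/2)$; it is bounded there by some $C'$, whence $C=C'/\pi_0^q$ works uniformly in $G$ and in the weights $\pi_{j,n}$.

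\textbf{Lower bound.} Split according to whether $q\ge 1$ or $0<q<1$. In the first case, the convexity of $y\mapsto y^q$ and Jensen's inequality under the probability measure $f(x,G)\,\dd\lambda$ give
\begin{equation*}
\mathbb{E}_G \Big|\frac{f^{(p)}(x,\theta_0)}{f(x,G)}\Big|^q \ge \Big(\int |f^{(p)}(x,\theta_0)|\,\dd\lambda\Big)^q>0.
\end{equation*}
In the second case, the concavity of $y\mapsto y^{1-q}$ applied to $f(x,G)=\int f(x,\theta')\,\dd G(\theta')$ yields $f(x,G)^{1-q}\ge \int f(x,\theta')^{1-q}\,\dd G(\theta')$, so by Fubini
\begin{equation*}
\mathbb{E}_G \Big|\frac{f^{(p)}(x,\theta_0)}{f(x,G)}\Big|^q \ge \int_\Theta E_{p,q}(\theta',\theta_0,\theta')\,\dd G(\theta') \ge \min_{\theta'\in\Theta} E_{p,q}(\theta',\theta_0,\theta'),
\end{equation*}
and this minimum is strictly positive by continuity on compact $\Theta$ together with the nonvanishing integral of $|f^{(p)}(\cdot,\theta_0)|$.

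The main obstacle is in the upper bound: controlling the mixture denominator $f(x,G_n)$ by its individual components uniformly in the weights $\pi_{j,n}$. The convexity of $y\mapsto y^{-q}$ combined with the uniform lower bound $\pi_0$ on $\sum_{j\ge m_0}\pi_{j,n}$ provides exactly this mechanism; the lower bound then follows routinely once the correct direction of Jensen is chosen according to whether $q\ge 1$ or $q<1$.
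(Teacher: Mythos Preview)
Your upper bound is essentially the paper's argument: drop the first $m_0-1$ components, use convexity of $y\mapsto y^{-q}$ to dominate $f(x,G_n)^{-q}$ by a convex combination of $f(x,\theta_{j,n})^{-q}$, then bound each $E_{p,q}(\theta_1,\theta,\theta_{j,n})$ uniformly by compactness and continuity.

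For the lower bound with $q\ge 1$, your route is different and in fact cleaner than the paper's. The paper argues pointwise: it finds a set $B$ of positive $\lambda$-measure where $|f^{(p)}(x,\theta_0)|\ge\eps$, intersects with $\{f(x,G)\le 2/\lambda(B)\}$ (which still has measure $\ge\lambda(B)/2$ by Markov), and bounds the integrand below there. Your direct Jensen step $\mathbb{E}_G[Y^q]\ge(\mathbb{E}_G[Y])^q=\bigl(\int|f^{(p)}(x,\theta_0)|\,\dd\lambda\bigr)^q$ gives the same uniform-in-$G$ positive lower bound in one line, and even identifies the constant.

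For $0<q<1$ there is a gap: your conclusion that $\min_{\theta'\in\Theta}E_{p,q}(\theta',\theta_0,\theta')>0$ does not follow from the hypotheses. Writing it out, $E_{p,q}(\theta',\theta_0,\theta')=\int|f^{(p)}(x,\theta_0)|^q f(x,\theta')^{1-q}\,\dd\lambda$, and nothing in $(p,q)$-smoothness or in the nonvanishing of $\int|f^{(p)}(\cdot,\theta_0)|\,\dd\lambda$ prevents $f(\cdot,\theta')$ from vanishing $\lambda$-a.e.\ on the support of $f^{(p)}(\cdot,\theta_0)$; continuity of $E_{p,q}$ on $\Theta^3$ allows the value zero. (The paper's own argument suffers the same defect for $q<1$, since an upper bound on $f(x,G)$ does not lower-bound $f(x,G)^{1-q}$.) This is not fatal in context: the proposition is only invoked in the paper for $q\in\lb 1,4\rb$, so your $q\ge 1$ case covers everything that is actually used.
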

    \begin{proof}
      For $n$ large enough, we have $\left|\theta _{j,n} - \theta _0
      \right|\le \eps / 2$ for all $ j\in\lb m_0,m\rb$. Hence
      $\left|\theta _{j,n} - \theta\right|\le \eps$ for all $\theta$
      such that $\left|\theta - \theta _0 \right| \le \eps / 2$. So
      that we may use \eqref{proche}. By compactness and continuity,
      there is a finite $C$ such that  
\[\mathbb{E}_{\theta_1}\left|\frac{f^{(p)}(x,
    \theta)}{f(x, \theta _{j,n})} \right|^q \le C\]
 for all such $(j,n)$ and all $\theta _1$. Since $f(x,G)$ is a convex
 combination of some $f(x,\theta_1)$, we may replace $\theta _1$ by
 $G$ in the former expression. Since the function $1/y^q$ is
 convex on positive reals, by Jensen inequality, setting
 $A=\sum_{j=m_0}^m \pi_{j,n}$, 
 \[ \sum_{j=m_0}^m \frac{\pi_{j,n}}{A} \left| \frac{f^{(p)}(x, \theta)}{f(x,
     \theta _{j,n})} \right|^q\ge\left|
   \frac{f^{(p)}(x, \theta)}{\sum_{j=m_0}^m \frac{\pi_{j,n}}{A}f(x, \theta _{j,n})}\right|^q \geq A^q\left|
   \frac{f^{(p)}(x, \theta)}{f(x, G_{n})}\right|^q,\]
and taking expectations with respect to $G$ we obtain the upper bound 
\[\E_G\left|
   \frac{f^{(p)}(x, \theta)}{f(x, G_{n})}\right|^q\le \frac{C}{A^q}\le \frac{C}{\pi_0^q}.\]

The lower bound does not depend on $(p, q )$-smoothness. It is a simple consequence of rewriting:
        \begin{align*}
            \mathbb{E}_{G}\left|\frac{ f^{(p)}(x, \theta _0)}{f(x,G)} \right|^q & =  \int\left|\frac{ f^{(p)}(x, \theta _0)^q}{f(x,G)^{q-1}} \right|   \dd\lambda(x)     
        \end{align*}
        and noticing $\int\left|f(x,G) \right|\dd\lambda(x) =1$ since
        $f(x, G)$ is a probability density. By assumption, there is a
        set $B$ of measure $\lambda(B)=M>0$ on which the function $f^{(p)}(x, \theta
        _0)$ is more than some $\eps >0$. Now, the set $B\cap
        \{f(x,G)\le 2/M\}$ is of measure at least $M/2$ and thus 
\[\int\left|\frac{ f^{(p)}(x, \theta _0)^q}{f(x,G)^{q-1}} \right|
\dd\lambda(x)     \ge \left[\frac{M}{2}\right]^{q+1} \eps^q.\]
    \end{proof}

    \begin{thm}
        \label{LAN}
       Let $m_0 \leq m$. Let $G_0  = \sum_{j=1}^{m_0}  \pi_j \delta_{\theta _j} \in \mathcal{G} _{m_0}$ be a mixing distribution whose $m_0$-th component is
   in the interior of $\Theta $, that is 
$\theta _{m_0} \in \interior{\Theta
    }$.
  
    Then there are mixing distributions $G_n(u)$ ($n\ge 0,u\in\R$)
    all in
    $\Gm{}$ such that: 
    \begin{enumerate}[(i)]
        \item \label{lan1} $\W(G_{n}(u), G_0) \to 0$ for all $u\in\R$. More precisely,
      for some $C(u)>0$, we have 
\[\W(G_{n}(u), G_0) \le C(u) n^{-1/ (4(m -m_0) +2)}.\]
    \item \label{lan2} The mixing distributions get closer at rate $n^{-1/(4(m
        - m_0) + 2)} $: for all $u_1$ and $u_2$, there are constants
      $c(u_1,u_2)>0$ such that
      \[\W(G_{n}(u_1), G_{n}(u_2)) \geq c(u_1,u_2) n^{-1/ (4(m -m_0) +
        2)}.\]
    \item \label{lan3} Suppose that a family of densities $\left\{ f(x, \theta ), \theta \in
          \Theta  \right\} $ with respect to $\lambda$ is $(p,q )$-smooth for
        all $p \in\lb 1, 2(m-m_0+1)\rb$  and $q\in \lb 1, 4\rb$. Assume moreover that 
\[\int \left|f^{(2(m-m_0)+1)}(x,\theta_{m_0})\right|\dd\lambda(x)>0.\]
 There is a number $\Gamma >0$ and an infinite subset $\N_0$ of $\mathbb{N} $ along
      which the experiments $\mathcal{E} _n = \left\{
        \prod_{i=1}^nf\left(x_i,G_n(u)\right), |u|
        \le u_{\max}(n)\right\} $ with $u_{\max}(n) \to\infty$ converge to the Gaussian shift
      experiment $\left\{ \mathcal{N} (u\Gamma, \Gamma), u \in \mathbb{R}
      \right\}$.
  \item \label{lan4} $u$ is the rescaled $(2 (m - m_0) + 1)$-th moment of
    the components of the mixing distribution near $\theta
    _{m_0}$.
    \end{enumerate}
    \end{thm}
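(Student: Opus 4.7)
The strategy is to cluster $N := m - m_0 + 1$ atoms at scale $\alpha_n := n^{-1/(4(m-m_0)+2)}$ around $\theta_{m_0}$ and parametrize the cluster by its $(2N-1)$-th rescaled moment. Pick a smooth family $\mu_u = \sum_{k=1}^N q_k(u)\delta_{y_k(u)}$ of $N$-point probability measures on $\R$, with distinct positive-weight atoms, whose first $2N-2$ moments are independent of $u$ and whose $(2N-1)$-th moment equals $u$; such a family is obtained by inverting the moment map around a reference $\mu_0$ in the interior of the positive-Hankel region, its Jacobian being a non-vanishing Vandermonde-type determinant at distinct atoms. Setting
\begin{equation*}
G_n(u) := \sum_{j=1}^{m_0-1}\pi_j\delta_{\theta_j} + \pi_{m_0}\sum_{k=1}^N q_k(u)\,\delta_{\theta_{m_0}+\alpha_n y_k(u)},
\end{equation*}
we have $G_n(u) \in \Gm{}$ for $n$ large (since $\theta_{m_0}$ is interior to $\Theta$), and \ref{lan4} holds by design, with $\alpha_n^{2N-1} = n^{-1/2}$ giving the LAN scale.

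For \ref{lan1} and \ref{lan2}, I use the Kantorovich--Rubinstein dual \eqref{dualW}. The support of $G_n(u) - G_0$ sits in an $O(\alpha_n)$-neighborhood of $\theta_{m_0}$, whence \ref{lan1} with $C(u) = \pi_{m_0}\max_k|y_k(u)|$. For \ref{lan2}, the signed measure $G_n(u_1) - G_n(u_2)$ has all moments of order $\le 2N-2$ equal to zero and $(2N-1)$-th moment equal to $\pi_{m_0}\alpha_n^{2N-1}(u_1 - u_2) = \pi_{m_0} n^{-1/2}(u_1 - u_2)$; testing against a $1$-Lipschitz function that matches $\alpha_n^{-(2N-2)}(\theta - \theta_{m_0})^{2N-1}/L$ on a fixed neighborhood of $\theta_{m_0}$ (with $L$ its Lipschitz constant there) picks out this moment difference and yields the lower bound $c(u_1, u_2)\alpha_n$.

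For \ref{lan3}, I Taylor-expand each $f(x, \theta_{m_0} + \alpha_n y)$ to order $2N-1$ in $\alpha_n y$ and integrate against $\mu_u$. Since only the $(2N-1)$-th moment of $\mu_u$ depends on $u$,
\begin{equation*}
\Delta_n(x, u) := \frac{f(x, G_n(u)) - f(x, G_n(0))}{f(x, G_n(0))} = \frac{u}{\sqrt n}\,g_n(x) + \rho_n(x, u),
\end{equation*}
with $g_n(x) := \pi_{m_0} f^{(2N-1)}(x, \theta_{m_0})/[(2N-1)!\,f(x, G_n(0))]$ and $\rho_n$ a Taylor remainder of higher $\alpha_n$-order. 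Expanding $\ln(1 + \Delta_n) = \Delta_n - \tfrac12\Delta_n^2 + O(|\Delta_n|^3)$ in $Z_{n,0}(u) = \sum_i \ln(1 + \Delta_n(X_i, u))$ and using $\E_{G_n(0)}[\Delta_n] = 0$, the triangular-array CLT applied to $Z_n := n^{-1/2}\sum_i g_n(X_i)$ gives $Z_n \xrightarrow[]{d} \mathcal{N}(0, \Gamma)$ along a subsequence $\N_0$ where $\Gamma_n := \E_{G_n(0)}[g_n^2] \to \Gamma > 0$; the WLLN yields $\tfrac{1}{n}\sum_i g_n(X_i)^2 \xrightarrow[]{P} \Gamma$, and the cubic and remainder contributions are $o_P(1)$.

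The hardest step is the uniform control of $\rho_n$ and of the quadratic and cubic parts of the log-expansion: these quantities are linear combinations of ratios $f^{(p)}(x, \theta_{m_0})/f(x, G_n(0))$ for $p$ up to $2N = 2(m - m_0 + 1)$, and must be bounded in $L^q(f(\cdot, G_n(0))\,\dd\lambda)$ for $q$ up to $4$ in order both to verify the Lindeberg condition and to push cubic/remainder terms into $o_P(1)$. This is exactly what Proposition~\ref{tversmix} delivers under the assumed $(p, q)$-smoothness. Its lower bound, combined with the hypothesis $\int|f^{(2N-1)}(x, \theta_{m_0})|\,\dd\lambda > 0$, forces $\Gamma_n \ge c > 0$; its upper bound provides the compactness needed to extract the convergent subsequence $\N_0$ and the limit $\Gamma > 0$.
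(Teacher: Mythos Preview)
Your proposal is correct and follows essentially the same route as the paper: construct the cluster via Lindsay's moment theorem and the inverse function theorem (the paper computes the Jacobian explicitly as a Vandermonde-type product), then run the Taylor expansion and prove LAN via a triangular-array CLT with moment bounds supplied by Proposition~\ref{tversmix}.

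One point deserves care. For \ref{lan2} the paper does not use a test function; it simply observes that the first $m_0-1$ atoms are shared and the remaining cluster is an exact homothety of scale $\alpha_n$ around $\theta_{m_0}$, so
\[
\W(G_n(u_1),G_n(u_2))=\alpha_n\,\W(G_1(u_1),G_1(u_2))
\]
identically. Your test-function argument can be made to work, but not as written: on a \emph{fixed} neighbourhood of $\theta_{m_0}$ the function $\alpha_n^{-(2N-2)}(\theta-\theta_{m_0})^{2N-1}$ has Lipschitz constant of order $\alpha_n^{-(2N-2)}\to\infty$, so dividing by that $L$ kills the lower bound. You need the neighbourhood to shrink at rate $\alpha_n$ (equivalently, take $\phi_n(\theta)=\alpha_n\psi((\theta-\theta_{m_0})/\alpha_n)$ for a fixed $1$-Lipschitz $\psi$ equal to $y^{2N-1}/L$ on a bounded interval containing all $y_k(u_1),y_k(u_2)$); then $L$ is bounded uniformly in $n$ and your computation goes through. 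This is precisely the homothety argument in disguise.
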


    The theorem shows that when the first moments of the components of the mixing distribution $G$ near $\theta _{m_0}$ are known, all remaining knowledge we may acquire is on the next moment, and that's the ``right'' parameter: it is exactly as hard to make a difference between, say, $10$ and $11$ as between $0$ and $1$.

On the other hand, for our original problem the cost function is the transportation distance between mixing distributions. So that an optimal estimator in mean square error for $u$ is not optimal for our original problem. Moreover just taking the loss function $  c(u_1,u_2)$ in the limit experiment runs into technical problems since this might go to zero as $u_2$ goes to infinity. They could be overcome, but it is easier to state a lower bound on risk using just contiguity and two points:
\begin{cor}
    \label{lower_bound}
    The optimal local minimax rate of estimation around $G_0$ of a mixture cannot be better than $ n^{-1/ (4(m - m_0) + 2)} $ in general: for any sequence of estimators $\hat{G}_n$ and any $\epsilon>0 $, we have:
    \begin{align}
        \label{local_minimax}
        \liminf_{n\to \infty} \!\!\!\!\!\!\sup_{\substack{G_1
      \text{s.t.}\\ W(G_1, G_0) < n^{-1/ (4(m - m_0) + 2)+\eps}}}
      \!\!\!\!\!\! n^{1/ (4(m - m_0) + 2)} \mathbb{E}_{f(\cdot,G_1)^{\otimes n}}\W(G_1, \hat{G}_n) & > 0,
    \end{align}
where the true distribution $G_1$ lies in $\Gm{}$.
\end{cor}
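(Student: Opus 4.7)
The plan is a standard two-point Le Cam lower bound using the sequences $G_n(u_1), G_n(u_2)\in\Gm{}$ provided by Theorem~\ref{LAN}, for arbitrarily fixed reals $u_1\ne u_2$. By part (i) of Theorem~\ref{LAN}, each $G_n(u_i)$ lies within Wasserstein distance $C(u_i)\,n^{-1/(4(m-m_0)+2)}$ of $G_0$, which is strictly less than $n^{-1/(4(m-m_0)+2)+\epsilon}$ for $n$ large. Hence both sequences are eventually admissible in the supremum of~\eqref{local_minimax}, and it suffices to lower bound
$\max_{i=1,2}\mathbb{E}_{f(\cdot,G_n(u_i))^{\otimes n}}W(\hat{G}_n, G_n(u_i))$.

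Write $r_n := c(u_1,u_2)\,n^{-1/(4(m-m_0)+2)}$. Part (ii) combined with the triangle inequality for $W$ gives
\[
W(\hat{G}_n, G_n(u_1)) + W(\hat{G}_n, G_n(u_2)) \;\ge\; W(G_n(u_1), G_n(u_2)) \;\ge\; r_n.
\]
Setting $A_n := \{W(\hat{G}_n, G_n(u_1)) \le W(\hat{G}_n, G_n(u_2))\}$, one has $W(\hat{G}_n, G_n(u_2)) \ge r_n/2$ on $A_n$ and $W(\hat{G}_n, G_n(u_1)) \ge r_n/2$ on $A_n^c$, so Markov's inequality yields
\[
\max_{i=1,2}\mathbb{E}_{f(\cdot,G_n(u_i))^{\otimes n}}W(\hat{G}_n, G_n(u_i)) \;\ge\; \frac{r_n}{4}\bigl(\mathbb{P}_{f(\cdot,G_n(u_1))^{\otimes n}}(A_n^c) + \mathbb{P}_{f(\cdot,G_n(u_2))^{\otimes n}}(A_n)\bigr).
\]

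The heart of the argument is to bound the last sum below by a constant $\delta>0$ along the subsequence $\N_0$ from part (iii). Since the experiments $\mathcal{E}_n$ converge along $\N_0$ to the non-degenerate Gaussian shift $\{\mathcal{N}(u\Gamma,\Gamma),\,u\in\R\}$, the two product measures $f(\cdot,G_n(u_1))^{\otimes n}$ and $f(\cdot,G_n(u_2))^{\otimes n}$ are mutually contiguous, with total variation distance converging to $\|\mathcal{N}(u_1\Gamma,\Gamma)-\mathcal{N}(u_2\Gamma,\Gamma)\|_{\mathrm{TV}}<1$; equivalently, the LAN expansion shows that the log-likelihood ratio $Z_{n,0}(u_2)-Z_{n,0}(u_1)$ converges in law under either measure to a non-degenerate Gaussian. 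The classical testing inequality $\mathbb{P}_0(B^c) + \mathbb{P}_1(B) \ge 1 - \|\mathbb{P}_0 - \mathbb{P}_1\|_{\mathrm{TV}}$ then produces the required $\delta$.

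Combining the three steps, the expression inside the $\liminf$ in~\eqref{local_minimax} is bounded below by $c(u_1,u_2)\delta/4>0$ along $\N_0$, so the $\liminf$ is strictly positive. The main obstacle is the third step, which transfers LAN convergence into a quantitative lower bound on two-point testing; one should keep in mind that this convergence is claimed only along $\N_0$, which is precisely why the corollary is formulated with a $\liminf$ rather than a $\lim$.
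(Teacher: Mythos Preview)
Your argument is correct and follows the same two-point Le Cam strategy as the paper: restrict the supremum to $\{G_n(u_1),G_n(u_2)\}$ (admissible by part~\eqref{lan1}), use the triangle inequality and part~\eqref{lan2} to reduce to a testing lower bound, and then exploit the LAN property~\eqref{lan3} along $\N_0$ to show the two product measures cannot be perfectly separated.

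The execution of the last step differs. The paper fixes $u_1=0$ and derives the explicit contiguity estimate~\eqref{eq:liminf} by hand from the LAN expansion (writing $\Pnu(A)=\Eno[\rho_n e^{uZ_n-\frac{u^2}{2}\Gamma}\1_A]$ and restricting to $\{Z_n>0\}$), then applies it to the event $A=\{n^{1/(4(m-m_0)+2)}\W(G_n(0),\hat G_n)\ge a\}$. You instead invoke the general fact that weak convergence of experiments forces convergence of the likelihood ratio $dP_{n,u_2}/dP_{n,u_1}$ in law, hence of the total-variation affinity via $\E[\min(1,L_n)]$, and then apply the testing inequality $P_0(B^c)+P_1(B)\ge 1-\|P_0-P_1\|_{\mathrm{TV}}$. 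Your route is the more textbook one and yields the sharp limiting affinity; the paper's is more self-contained and avoids citing convergence-of-experiments machinery. Note that with general $u_1\ne 0$ your claim that the log-likelihood ratio converges ``under either measure'' needs Le~Cam's third lemma to transfer the LAN expansion from $\Pno$ to $\mathbb{P}_{n,u_1}$; taking $u_1=0$ as the paper does makes this step immediate.
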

\begin{proof}[Proof of corollary~\ref{lower_bound}]

    Fix $u > 0$ and consider the densities $f_{n,u}(x)=\prod_{i=1}^nf\left(x_i,G_n(u)\right)$  with associated probability
    measures $\Pnu$ as in Theorem~\ref{LAN} \eqref{lan3}. We have 
    \begin{equation}
      \label{eq:liminf}
      \liminf_{n\to \infty}\inf_{A:\Pno(A)\ge 3/4}\Pnu(A)\ge
\frac14\e^{-\frac{u^2}{2}\Gamma}.
    \end{equation}
Indeed, the LAN property \eqref{ELAN} can be written as 
\[\rho_n:=\frac{f_{n,u}(X)}{f_{n,0}(X)}\e^{-uZ_n+\frac{u^2}{2}\Gamma}\xrightarrow[]{P}1,\]
with $X$ of density $f_{n,0}$ and $Z_n$ with asymptotic distribution $\mathcal{N}(0,\Gamma)$. For any event $A$, 
\begin{equation*}
  \Pnu(A)  = \Eno\left(\frac{f_{n,u}(X)}{f_{n,0}(X)}\1_A\right) = \Eno\left(\rho_n\e^{uZ_n-\frac{u^2}{2}\Gamma}\1_A\right).
\end{equation*}
Furthermore, by restriction on the event $\{Z_n>0\}$ and by using
$\rho_n \xrightarrow[]{P}1$, we get that $\Pnu(A)$ is bounded below by
\begin{equation*}
 \e^{-\frac{u^2}{2}\Gamma}\left[\Pno(A)-\Pno(Z_n\le 0)\right]+o(n).
\end{equation*}
Taking now the infimum on events $A$ such that $\Pno(A)\ge 3/4$ and passing to the limit as $n\to \infty$, we obtain \eqref{eq:liminf}.

We now consider, for any sequence of estimators $\hat{G}_n$, the event 
\[A =  \{ n^{1/ (4(m - m_0) + 2)} \W(G_{n}(0), \hat{G}_n) \geq a\}\] 
for some $a>0$ to choose.  Notice that by the triangle's inequality its complement $A^c$ satisfies 
\[A^c\subset  \{n^{1/ (4(m - m_0) + 2)} \W(G_{n}(u), \hat{G}_n) \geq
c(u,0) -a\}\]
where $c(u,0)>0$ is given by Theorem~\ref{LAN} \eqref{lan2}. Choose
$a=c(u,0)/2$.Then either  $\Pno(A) \geq 1/4  $, which gives 
\[\sup_{G_1 \in \{G_n(0)\}} n^{1/ (4(m
  - m_0) + 2)}\mathbb{E}_{f(\cdot,G_1)^{\otimes n}}\W(G_1, \hat{G}_n)
\ge \frac{a}{4},\]
 or  $\Pnu(A^c) \geq   \e^{-\frac{u^2}{2}\Gamma}/4$ in
the limit, by \eqref{eq:liminf},  so that 
\[ \liminf_{n\to \infty} \sup_{G_1 \in \{G_n(u)\}} n^{1/ (4(m
  - m_0) + 2)}\mathbb{E}_{f(\cdot,G_1)^{\otimes n}}\W(G_1, \hat{G}_n)
\ge \frac{a}{4}\e^{-\frac{u^2}{2}\Gamma}.\]
Thus,  gathering the two inequalities, we get
\[ \liminf_{n\to \infty} \sup_{G_1 \in \{G_n(0), G_n(u)\}} n^{1/ (4(m
  - m_0) + 2)}\mathbb{E}_{f(\cdot,G_1)^{\otimes n}}\W(G_1, \hat{G}_n)
\ge \frac{a}{4}\e^{-\frac{u^2}{2}\Gamma}.\]
Note to finish that by Theorem~\ref{LAN} \eqref{lan1}, each $G_n(0)$ or
$G_n(u)$ is at $\W$-distance at most  $n^{-1/ (4(m - m_0) + 2)+\eps}$
from $G_0$, for large $n$ enough. 
\end{proof}

\begin{rems}
    \label{remLAN}
    We want only an example of this slow convergence, and that it be somewhat typical. That's why we have chosen the regularity conditions to make the proof easy, while still being easy to check, in particular for exponential families. 
    
    In particular, it could probably be possible to lower $q$ in
    $(p,q)$-smoothness to $2+\eps$ and still get the uniform bound we
    use in the law of large numbers below. Similarly, less derivability might be necessary if we tried to imitate differentiability in quadratic mean.
    
    In the opposite direction the variance $\Gamma $ in the limit
    experiment is really expected to be $\pi_{m_0}^2\mathbb{E}_{G_0}
    \left| \frac{f^{(2d-1)}(x, \theta _{m_0}) }{f(x, G_0)} \right|^2$ in most cases, but more stringent regularity conditions may be needed to prove it.
\end{rems}
 \begin{proof}[Proof of  Theorem~\ref{LAN}]

In this proof and the rest of the paper, we need to compare asymptotic sequences.
The notation $a_n\lec b_n$ (or even $a\lec b$ if $n$ is kept
implicit) means that there is a  positive constant
$C$ such that $a_n\le C b_n$ ; in other words,
$a_n=O(b_n)$. We will also use $a_n \gec b_n$ for $a_n \ge C b_n$, and  $a_n\asymp b_n$  for $b_n \lec a_n\lec b_n$. Finally $a_n \lec_u b_n$ means that the constant may depend on $u$, that is  $a_n\le C(u) b_n$.

 We use the following theorem by  \citet[Theorem 2A]{Lindsay} on
 the matrix of moments ; the idea is close to the Hankel criterion
   developed by    \cite{Gass} to estimate the order of a mixture.
\begin{thm}
    \label{Lindsay}
Given numbers $1,m_1,\ldots,m_{2d}$, write $M_k$ for the
 $k+1$ by $k+1$ (Hankel)  matrix with entries $(M_k)_{i,j} =
 m_{i+j-2}$ for $k=1,\ldots,d$. 
    \begin{enumerate}[(a)]
    \item The numbers $1, m_1, \dots, m_{2d}$ are the moments of a
    distribution with exactly $p$ points of support if and only if
    $\det M_k > 0$ for $k=1,\ldots,d-1$ and $\det M_p = 0$.
  \item If the numbers $1, m_1, \dots, m_{2d-2}$ satisfies $\det M_k > 0$ for $k=1,\ldots,d-1$ and $m_{2d-1}$ is any scalar, then there exists a unique distribution with exactly $d$ points of support and those initial $2d-1$ moments. 
    \end{enumerate} 
\end{thm}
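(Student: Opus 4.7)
The plan is to interpret the matrices $M_k$ as Gram matrices of the bilinear form $(P,Q) \mapsto L(PQ)$ on polynomials of degree at most $k$, where $L$ is the linear functional on polynomials defined by $L(x^j) = m_j$. If the moments come from a $p$-point distribution $G = \sum_{i=1}^p \pi_i \delta_{\theta_i}$ with $\pi_i > 0$, then $L(PQ) = \sum_i \pi_i P(\theta_i) Q(\theta_i)$, which yields the factorization $M_k = V_k \Pi V_k^\top$ with $V_k = (\theta_i^j)$ Vandermonde and $\Pi = \mathrm{diag}(\pi_i)$. This immediately gives $\mathrm{rank}(M_k) = \min(k+1, p)$, so $M_k$ is positive definite for $k \le p-1$ and singular for $k \ge p$, proving the forward implication of (a).

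For the converse of (a), assume $\det M_k > 0$ for $k \le p-1$ and $\det M_p = 0$. Invertibility of the leading block $M_{p-1}$ makes $\ker M_p$ one-dimensional and forces any nonzero kernel vector $c = (c_0, \ldots, c_p)$ to satisfy $c_p \ne 0$; hence $P(x) = \sum c_i x^i$ has degree exactly $p$. The crucial step is to prove that $P$ has $p$ distinct real roots. Let $\theta_1, \ldots, \theta_r$ be the distinct real roots of $P$ of odd multiplicity and set $S(x) = \prod_j (x - \theta_j)$; then $P \cdot S$ has constant sign on $\R$. On one hand $L(PS) = 0$, because $c \in \ker M_p$ and $\deg S \le p$ give $L(PS) = c^\top M_p s = 0$; on the other hand, if $r < p$ then $PS$ is a nonzero polynomial of degree at most $2p - 2$ of constant sign, and positive definiteness of $M_{p-1}$ combined with the sum-of-two-squares decomposition of nonnegative real univariate polynomials forces $L(PS) \ne 0$, a contradiction. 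Once the $p$ distinct real roots $\theta_i$ are available, define the weights $\pi_i$ by solving the Vandermonde system $\sum_i \pi_i \theta_i^k = m_k$ for $k = 0, \ldots, p-1$, and verify $L(R) = \sum_i \pi_i R(\theta_i)$ for all $R$ of degree at most $2p$ by Euclidean division by $P$ together with $L(P \cdot Q) = 0$ for $\deg Q \le p$. Positivity $\pi_i > 0$ follows by evaluating $L$ on the nonnegative polynomial $\prod_{j \ne i}(x - \theta_j)^2$, using once more the sum-of-squares argument.

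For part (b), cofactor expansion of $\det M_d$ along its last row or column yields $\det M_d = \det M_{d-1} \cdot m_{2d} + R(m_0, \ldots, m_{2d-1})$, so $\det M_d$ is affine in $m_{2d}$ with nonzero leading coefficient $\det M_{d-1} > 0$. Hence any prescribed $m_{2d-1}$ is completed by a unique $m_{2d}$ making $\det M_d = 0$, and part (a) then supplies the unique $d$-point distribution realizing $m_0, \ldots, m_{2d}$, in particular the prescribed initial $2d-1$ moments. The main obstacle is the converse of (a), and specifically the distinctness of the real roots of the kernel polynomial: this is the step where positive definiteness of $M_{p-1}$ must be used in its most subtle form, through the classical representation of nonnegative univariate real polynomials as sums of two squares.
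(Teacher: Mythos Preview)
The paper does not prove this theorem: it is quoted as Theorem~2A of Lindsay and used as a black box inside the proof of Theorem~\ref{LAN}. There is therefore no paper proof to compare your argument against.

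Your argument is the standard one and is essentially correct. Two places could be tightened. In the converse of (a), you assert that if $r<p$ then $PS$ has degree at most $2p-2$; this relies on the (unstated) observation that a nonzero real polynomial of constant sign must have even degree, so $p+r<2p$ together with $p+r$ even forces $p+r\le 2p-2$. In (b), uniqueness needs one more sentence: any $d$-point distribution with the prescribed $m_0,\dots,m_{2d-1}$ satisfies $\det M_d=0$ by the forward direction of (a), and your affine-in-$m_{2d}$ observation then pins down $m_{2d}$ uniquely; only then does the one-dimensional kernel of $M_d$ from the construction in (a) yield a unique $P$, hence unique support points and (via the Vandermonde system) unique weights. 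Finally, note that the paper's statement of (a) contains a likely typo (``$k=1,\dots,d-1$'' should read ``$k=1,\dots,p-1$''), which you have silently and correctly repaired.
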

 Set $d=m-m_0+1$ and consider any  numbers $1, m_1, \dots,
 m_{2d-2}$ such that $\det M_1 > 0, \dots, \det M_{d-1} > 0$. By Theorem~\ref{Lindsay}, we may then define for any $u\in \mathbb{R} $ a distribution $G(u) = \sum_{j= m_0}^m \pi_j(u) \delta _{h_j(u)}$ such that its initial moments are $1, m_1, \dots, m_{2d-2}, u$.

Moreover, the unicity in Theorem \ref{Lindsay} implies that, with $\pi_i > 0$ and $h_1 < \dots < h_{d}$, the following application
is injective:
\begin{align*}
\phi :(\pi_1,\ldots,\pi_d,h_1,\ldots,h_d)\mapsto 
\left(\sum_1^d\pi_j,\sum_1^d\pi_j h_j,\sum_1^d\pi_j h_j^2,\ldots,\sum_1^d\pi_j h_j^{2d-1}\right)
\end{align*}
Now, its Jacobian is non-zero (see Appendix~\ref{jaco} for a proof): 
\begin{align}
    \label{Jac}
J(\phi)=(-1)^{\frac{(d-1)d}{2}}\,\pi_1\cdots\pi_d \prod_{1\le j<k\le d}(h_j- h_k)^4.  
\end{align}
Thus the inverse of $\phi$ is locally continuous, so that the $h_j(u)$
are all continuous. In particular, they are bounded if $u$ is bounded:
for any $U>0$, there is a finite $H(U)$ such that if $|u| < U$, then
$|h_j(u)| \le H(U)$. We may then find and use a sequence $u_{\max}(n)$ such that $u_{\max}(n) \to \infty$ and $H(u_{\max}(n)) n^{-1/(4d-2)} \to 0$. 

We now define the mixing distributions
\begin{equation}
  \label{mixdist}
 G_n(u)  = \sum_{j=1}^{m_0 - 1} \pi_j \delta_{\theta _j} + \pi_{m_0}
\sum_{j= m_0}^m \pi_j(u) \delta _{\theta_{j,n}(u)}
\end{equation}
with 
\[\theta _{j,n}(u)  = \theta _{m_0} + n^{-1/(4d-2)} h_j(u).\]
This definition satisfies \eqref{lan4}. The form of $G_n(u)$ makes it clear that it converges to $G_0$ at
speed  $n^{-1/(4d - 2)} $: it is easily seen from the dual
representation of $\W$ that for  $|u|\le U$
\[\W(G_{n}(u), G_0)\le \pi_{m_0}H(U) n^{-1/(4d - 2)}.\]
This proves \eqref{lan1}.

Moreover, since all other points and proportions are equal, the
transportation distance  $ \W(G_{n}(u_1), G_{n}(u_2))$ is equal to the
transportation distance between the last $p$ components. Since those
support points keep the same weights and are homothetic with scale
$n^{-1/(4p - 2)}$ around $\theta _{m_0}$, we have exactly 
\[ \W(G_{n}(u_1), G_{n}(u_2)) =  \W(G_{1}(u_1), G_{1}(u_2)) n^{-1/(4d - 2)}.\]
This proves \eqref{lan2}.

We now prove local asymptotic normality. In order to shorten
notations, the probability under the mixing distribution $G_n(0)$ will
be denoted by $\Pno$ and the corresponding expectation
$\Eno$. Let $X_{1,n}, \dots, X_{n,n}$ be an
i.i.d. sample with density $\prod_{i=1}^nf\left(x_i,G_n(0)\right)$. Then, we can write the Log-likelihood ratio as
\begin{equation*}
    Z_{n,0}(u)=\ln \left(\frac{\prod_{i=1}^nf(X_{i,n}, G_n(u))}{\prod_{i=1}^nf(X_{i,n}, G_n(0))}\right)=\sum_{i=1}^n \ln \left( 1 + Y_{i,n} \right).
\end{equation*}
with
\begin{equation}
\label{Yinu}
  Y_{i,n} (u)= \frac{f(X_{i,n}, G_n(u)) -  f(X_{i,n}, G_n(0)) }{ f(X_{i,n}, G_n(0))}.
\end{equation}
By definition, we have
\begin{equation*}
 f(x,G_n(u))- f(x,G_0)=\pi_{m_0}\sum_{j=m_0}^m\pi_{j,n}(u)\left[f(x,\theta_{j,n}(u))-f(x,\theta_{m_0})\right].
\end{equation*}
Moreover, by Taylor expansion with remainder, 
\begin{align*}
  f(x,\theta_{j,n}(u))-f(x,\theta_{m_0})=\sum_{k=1}^{2d-1}&\left(\frac{h_j(u)}{n^{1/(4d-2)}}\right)^kf^{(k)}(x,\theta_{m_0})\\
&+\int_{\theta_{m_0}}^{\theta_{j,n}(u)}f^{(2d)}(x,\theta)\frac{(\theta_{j,n}(u)-\theta)^{2d-1}}{(2d-1)!}\dd\theta
\end{align*}
so that  we get by linearity
\begin{align}  \label{FxGnu}f(x,G_n(u))-f(x,G_0)=\pi_{m_0}\left[\sum_{k=1}^{2d-1}\frac{m_k}{n^{k/(4d-2)}}f^{(k)}(x,\theta_{m_0})+R_n(x,u)\right]
\end{align}
with moments $m_1,\ldots,m_{2d-2}$ that do not depend on $u$ but
$m_{2d-1}=u$ and 
\begin{equation}
  \label{Rnxu}
 R_n(x,u)=\sum_{j=m_0}^m\pi_{j,n}(u) \int_{\theta_{m_0}}^{\theta_{j,n}(u)}f^{(2d)}(x,\theta)\frac{(\theta_{j,n}(u)-\theta)^{2d-1}}{(2d-1)!}\dd\theta.
\end{equation}
Thus, we can write from \eqref{Yinu}, \eqref{FxGnu} and \eqref{Rnxu}
\begin{equation}
\label{Yinubis}
 Y_{i,n}(u)= \pi_{m_0}\left[u n^{-1/2}Z_{i,n}+R_{i,n}(u)-R_{i,n}(0)\right] 
\end{equation}
with
\begin{equation*}
 R_{i,n}(u)=\frac{R_n(X_{i,n},u)}{f(X_{i,n}, G_n(0))}, \quad Z_{i,n}=\frac{f^{(2d-1)}(X_{i,n},\theta_{m_0})}{f(X_{i,n}, G_n(0))}.
\end{equation*}

For each fixed $n$ and $u$, the  $(Y_{i,n}(u),Z_{i,n},R_{i,n}(u)) $ are i.i.d. and centered under $G_n(0)$. Indeed, from \eqref{Yinu}, we have 
\[\Eno Y_{i,n}(u)=\int
[f(x,G_n(u))-f(x,G_n(0))]\dd\lambda(x)=0;\]
furthermore by expanding $f$ around $\theta_{m_0}$, we get iteratively using
$(p,q)$- smoothness 
that for $k=1,\ldots,2d-1$
\[\Eno \left[\frac{f^{(k)}(X_{i,n},\theta_{m_0})}{f(X_{i,n},G_n(0))}\right]=0\]
and in particular $\Eno Z_{i,n}=0$. And dividing \eqref{FxGnu} by
$f(x,G_n(0))$ gives as a result $\Eno R_{i,n}(u)=0$ for all $u$. 

Consider
\begin{equation}
\label{Zn}
  Z_n=\pi_{m_0}n^{-1/2}\sum_{i=1}^nZ_{i,n}.
\end{equation}
By Proposition~\ref{tversmix}, there are positive finite constants $c$ and $C$ independent on $n$ for $n$ large enough such that $c \le \Eno \left|Z_{1,n}\right|^2 \le C $. Up to taking a subsequence, we may then assume  $\Eno \left|Z_{1,n}\right|^2\to \sigma ^2$ for some positive $\sigma $. By Proposition \ref{tversmix} again, we have $ \Eno \left|Z_{1,n}\right|^3\le C'< \infty$ for all $n$ large enough.

We may then apply Lyapunov theorem \cite[Theorem 23.7]{Bill} to prove that, with $\Gamma = \sigma ^2\pi_{m_0}^2$,
\begin{align}
    \label{cvZn}
    Z_n &\xrightarrow[]{d} \mathcal{N} (0, \Gamma ).    
\end{align}
Indeed, setting $ s_n^2:=\sum_{i=1}^n\Eno
\left|Z_{i,n}\right|^2\sim n\sigma^2$, we see that the Lyapunov condition 
\[s_n^{-3}\sum_{i=1}^n \Eno \left|Z_{i,n}\right|^3\sim n^{-1/2}\sigma^{-3}\Eno
\left|Z_{1,n}\right|^3\xrightarrow[n\to\infty]{} 0\]
is satisfied so that $s_n^{-1}\sum_{i=1}^nZ_{i,n}$ converges in
distribution to $\mathcal{N} (0,1)$ and \eqref{cvZn} follows from the
equality $Z_n=\pi_{m_0}\left[\Eno
  \left|Z_{1,n}\right|^2\right]^{1/2}s_n^{-1}\sum_{i=1}^nZ_{i,n}$.

Now, to get the convergence  in probability of
$Z_{n,0}-uZ_n+\frac{u^2}{2}\Gamma$ to zero, it's enough to show the following convergences for all $u$:
\begin{eqnarray}
\sum_{i=1}^nY_{i,n}(u) -uZ_n & \xrightarrow[]{L^2} & 0, \label{p1}\\
  \sum_{i=1}^nY_{i,n}(u)^2-u^2\Gamma& \xrightarrow[]{L^1}  &0, \label{p2}\\
 \sum_{i=1}^n|Y_{i,n}(u)|^3&  \xrightarrow[]{L^1}& 0. \label{p3}
\end{eqnarray}
Indeed, we will have, since $|\ln (1+y)-y+y^2/2|\le C|y|^3$ for $|y|\le
1/2$,
\[\left|Z_{n,0}-\sum_{i=1}^nY_{i,n}(u)+\frac12
  \sum_{i=1}^nY_{i,n}(u)^2\right|\le C  \sum_{i=1}^n|Y_{i,n}(u)|^3\]
with probability going to one with $n$, so that
\begin{align*}
  Z_{n,0}-uZ_n+\frac{u^2}{2}\Gamma = \sum_{i=1}^nY_{i,n}(u)
  -uZ_n&+\frac{1}{2}[u^2\Gamma -\sum_{i=1}^nY_{i,n}(u)^2]\\
&+ Z_{n,0}-\sum_{i=1}^nY_{i,n}(u)+\frac12 \sum_{i=1}^nY_{i,n}(u)^2
\end{align*}
will tend to $0$ in probability if \eqref{p1}, \eqref{p2} and \eqref{p3} hold.

To prove \eqref{p1}, note that from \eqref{Yinubis} and \eqref{Zn}
\[\sum_{i=1}^nY_{i,n}(u) -uZ_n =
\pi_{m_0}\left(\sum_{i=1}^n R_{i,n}(u)-\sum_{i=1}^n R_{i,n}(0)\right),\]
and the equalities
\[\Eno\left|\sum_{i=1}^n R_{i,n}(u)\right|^2=\sum_{i=1}^n\Eno R_{i,n}(u)^2= n\Eno|R_{1,n}(u)|^2\]
will give the desired $L^2$-convergence  if we can prove that for each $u$,
\begin{equation}
  \label{cvR1nu}
  n\Eno|R_{1,n}(u)|^2\xrightarrow[n\to\infty]{} 0.
\end{equation}
To this end, we  look at the
expression \eqref{Rnxu} of $R_{n}(x,u)$ for fixed $u$. We have
$|\theta _{j,n}(u) - \theta|^{2d - 1} \le H(u)^{2d-1} n^{-1/2}$ for any
$\theta$ in the integrand, any $j$ and $n$,. We may thus write 
\begin{align*}
\left|R_n(x,u)\right| & \le \sum_{j=m_0}^m \pi_j(u) \int_{\theta _{m_0} - H(u) n^{-\frac{1}{4d-2}}}^{\theta _{m_0} + H(u)  n^{-\frac{1}{4d-2}}} \left\lvert f^{(2d)}(x, \theta) \right\rvert \frac{H(u)^{2d-1}n^{-1/2}}{(2d-1)!} \mathrm{d}\theta \\
                      & \lec_u n^{-1/2}  \int_{\theta _{m_0} - H(u) n^{-\frac{1}{4d-2}}}^{\theta _{m_0} + H(u)  n^{-\frac{1}{4d-2}}}  \left\lvert f^{(2d)}(x, \theta) \right\rvert \mathrm{d}\theta.
\end{align*}
Since we have $\sigma $-finite
measures, we may use Fubini theorem. Since moreover $\theta$ in the
integrand is between $\theta _0$ and $\theta _{j,n}(u)$ which
converges to $\theta _0$, we may then apply
Proposition~\ref{tversmix}. For $q\in \lb 1,4\rb$, using convexity of $x \mapsto x^q$ on line two, we may then write:
\begin{align*}
    \Eno\left|R_{1,n}(u)\right|^q    & \lec_u   n^{-q/2}  \Eno\left|\frac{  \int_{|\theta-\theta _{m_0} |\lec_u n^{-\frac{1}{4d-2}}}   \left\lvert f^{(2d)}(x, \theta) \right\rvert \dd\theta }{f(x, G_n(0))}\right|^q \\
      & \lec_u   n^{-\frac{q}{2}-\frac{q-1}{4d-2}}   \int_{|\theta-\theta _{m_0}|\lec_u  n^{-\frac{1}{4d-2}}} \Eno\left|\frac{f^{(2d)}(x, \theta)  }{f(x, G_n(0))}\right|^q \dd\theta \\
      & \lec_u   n^{-\frac{q}{2}-\frac{q}{4d-2}}  C \\
      & \lec_u n^{-\frac{q}{2}-\frac{q}{4d-2}}  
\end{align*}
with $C$ from Proposition \ref{tversmix}. In particular,
\begin{equation}
  \label{cvEn0}
 n^{q/2} \Eno\left|R_{1,n}(u)\right|^q \lec_u n^{-q/(4d-2)}  \to 0. 
\end{equation}
Take $q=2$ to obtain \eqref{cvR1nu}  ; the proof of \eqref{p1} is complete.

To prove \eqref{p2}, note first that from \eqref{Yinubis}  and \eqref{Zn},
\begin{align*}
 \sum_{i=1}^nY_{i,n}(u)^2-\frac{u^2\pi_{m_0}^2}{n}\sum_{i=1}^nZ_{i,n}^2=\pi_{m_0}^2&\sum_{i=1}^n(R_{i,n}(u)-R_{i,n}(0))^2\\
&+\frac{2u \pi_{m_0}^2}{\sqrt{n}}\sum_{i=1}^n(R_{i,n}(u)-R_{i,n}(0))Z_{i,n}
\end{align*}
so that taking the $L^1$-norm and by the Cauchy-Schwarz inequality,
\begin{multline*}
  \Eno\left|
\sum_{i=1}^nY_{i,n}(u)^2-\frac{u^2\pi_{m_0}^2}{n}\sum_{i=1}^nZ_{i,n}^2\right|\lec_u
                                                                      n\Eno
                                                              |R_{1,n}(u)|^2+n\Eno
                                                              |R_{1,n}(0)|^2\\
+\sqrt{n\Eno |R_{1,n}(u)|^2+n\Eno |R_{1,n}(0)|^2}\sqrt{\Eno Z_{1,n}^2}
\end{multline*}
and the r.h.s. tends to $0$ by \eqref{cvR1nu} and the fact that $\Eno Z_{1,n}^2\to\sigma^2$.
Moreover, setting $\delta_n:=|\Eno Z_{1,n}^2-\sigma^2|$, we have 
\begin{eqnarray*}
  \Eno \left|n^{-1}\sum_{i=1}^nZ_{i,n}^2-\sigma^2\right|^2&\lec &\Eno
                                                                  \left|n^{-1}\sum_{i=1}^n(Z_{i,n}^2-\Eno
                                                                  Z_{1,n}^2)\right|^2+\delta_n^2\\
&\lec &
                                                                  n^{-1}\mathrm{Var}_{n,0}(Z_{1,n}^2)+\delta_n^2\to
  0\\
\end{eqnarray*}
which goes to zero since $\delta_n\to 0$ by definition and $\Eno
Z_{1,n}^4\le C$ for some constant $C$ by Proposition~\ref{tversmix}.
We have thus,   
\[\frac1n\sum_{i=1}^nZ_{i,n}^2\xrightarrow[]{L^2} \sigma^2\quad\text{and}\quad\left|\sum_{i=1}^nY_{i,n}(u)^2-u^2\pi_{m_0}^2\frac{1}{n}\sum_{i=1}^nZ_{i,n}^2\right|\xrightarrow[]{L^1} 0\]
which prove \eqref{p2}.

We turn to the proof of \eqref{p3}. It is easily seen from \eqref{Yinubis} that
\begin{equation*}
 \sum_{i=1}^n|Y_{i,n}(u)|^3\lec_u
  n^{-3/2}\sum_{i=1}^n|Z_{i,n}|^3+\sum_{i=1}^n|R_{i,n}(u)|^3+\sum_{i=1}^n|R_{i,n}(0)|^3
\end{equation*}
so that taking expectations
\begin{equation*}
\Eno \sum_{i=1}^n|Y_{i,n}(u)|^3\lec_u
  n^{-1/2}\Eno|Z_{1,n}|^3+n\Eno |R_{1,n}(u)|^3+n\Eno|R_{1,n}(0)|^3.
\end{equation*}
But each of the three terms in the r.h.s. tends to $0$: the first one
because of $\Eno|Z_{1,n}|^3\le C$ by Proposition~\ref{tversmix}, the
second and the third ones because of  \eqref{cvEn0} for $q=3$. Thus
$\sum_{i=1}^n|Y_{i,n}(u)|^3$ converges to $0$ in $L^1$.
\end{proof}

\begin{ex}
Let's take $m=2$, $m_0=1$ and $\theta_{m_0}=0$ so that $G_0 = \delta _0$. Then $G_{1,n} = \frac{1}{2} \left( \delta
      _{- 2 n^{-1/6}} + \delta _{2n^{-1/6}} \right) $ and $G_{2,n} =
    \frac{4}{5} \delta _{-n^{-1/6}} + \frac{1}{5} \delta _{4
      n^{-1/6}}$ both have $0$ as first moment, and $4n^{-1/3}$ as
    second moment. The third moments are respectively zero for $G_{1,n}$ and $12n^{-1/2}$ for $G_{2,n}$. With the notation \eqref{mixdist} in the proof of Theorem~\ref{LAN}, 
we have $G_{1,n}=G_n(0)$ and $G_{2,n}=G_n(12)$. Clearly, one has $\W(G_{1,n} ,G_{2,n})= n^{-1/6} $ for all $n$
and as a by-product of Theorem~\ref{LAN} \eqref{lan3},  $\{G_{1,n}\}$ and $\{G_{2,n}\} $ are contiguous.
\end{ex}




\section{The rate $n^{-1/(4(m-m_0) + 2)}$ is optimal}
\label{sec:upperbound}

We follow \citet{Deely&Kruse} and \citeauthor{Chen}'s \citeyearpar{Chen} strategy of estimating $G$ by 
minimizing the $L^{\infty}$ distance to the empirical repartition function \eqref{gn}. We then need to control this distance in terms of the Wasserstein metric (Theorem \ref{orders}), under appropriate identifiability conditions. To do so, we consider sequences of couples $(G_{1,n}, G_{2,n})$ minimizing the relevant ratios, and express $F(x, G_{1,n}) - F(x,G_{2,n})$ as a sum on their components $F(x, \theta_{j,n})$ and relevant derivatives. A difficulty arises: distinct components $\theta_{j,n}$ may converge to the same $\theta_j$, leading to cancellations in the sums. Forgetting this case was the mistake by \citet{Chen} in the proof of their Lemma 2. We deal with it by using a coarse-graining tree: each node corresponds to sets of components that converge to the same point at a given rate. We may then use Taylor expansions on each node and its descendants, while ensuring that we keep non-zero terms (Lemma \ref{lemrec}). 

\subsection{Strong identifiability of order $k$}
In what follows $\|\cdot\|_\infty$ is the supremum norm with respect to $x$ and $\|\cdot\|$ is the Euclidean norm (for instance). Recall that 
$ F^{(p)}(x, \theta )$ is the $p$-derivative of $F(x,\theta)$ with respect to $\theta$.
\begin{defin}
    \label{identifiability}
    A family $\left\{ F(x,\theta ), \theta \in \Theta  \right\} $ of
    distribution functions is \emph{$k$-strongly identifiable} if for any finite set of say $m$ distinct $\theta_j$, then the equality
    \begin{align*}
        \norm{ \sum_{p=0}^k \sum_{j=1}^m \alpha _{p,j} F^{(p)}(x, \theta _j) }_{\infty} = 0
    \end{align*}
    implies $\alpha _{p,j} = 0$ for all $p$ and $j$.
\end{defin}

\begin{rem}
    \label{infimum}
    For a $k$-strongly identifiable family and fixed $\theta _i$, we may consider 
    \begin{align*}
        \inf_{\norm{\alpha} = 1}  \norm{ \sum_{p=0}^k \sum_{j=1}^m \alpha _{p,j} F^{(p)}(x, \theta _j) }_{\infty}. 
    \end{align*}
    Since the inner norm is a continuous function of $\alpha$ and the
    sphere is compact, this infimum is attained, and hence not
    zero:  for some  $c(\theta_1,\ldots,\theta_m) > 0 $, we have:
    \begin{align}
        \label{alpha_bound}
        \norm{ \sum_{p=0}^k \sum_{j=1}^m \alpha _{p,j} F^{(p)}(x, \theta _j) }_{\infty} \geq c(\theta_1,\ldots,\theta_m) \norm{\alpha }.
    \end{align}   
\end{rem}
\subsection{Main result and corollaries}
\begin{thm}
    \label{orders}
   Assume that  $\left\{ F(x, \theta ), \theta \in \Theta  \right\} $ is
    $2m$-strongly identifiable and that $F(x, \theta )$ is $2m$-differentiable with respect to
   $\theta $ for all $x$, with
    \begin{equation}\label{po}        
        F^{(2m)}(x,\theta _1) - F^{(2m)}(x,\theta _2) = o(\theta _1 - \theta _2)
    \end{equation}
    uniformly in $x$. Then, for any $G_0\in \mathcal{G}_{m_0}$, there are $\varepsilon >0$ and $\delta>0 $ such that 
\begin{align}
    \label{local}
    \inf_{\substack{G_1,G_2\in\Glm \\G_1\ne G_2\\ \W(G_1,G_0) \vee \W(G_2,G_0)\le \eps} }   \frac{\left\lVert F(x, G_1) - F(x, G_2) \right\rVert _{\infty}}{\W(G_1, G_2)^{2m - 2m_0 + 1}} > \delta. 
\end{align}
\end{thm}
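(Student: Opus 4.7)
My plan is to argue by contradiction: assume the infimum in~\eqref{local} is zero, so there exist sequences $G_{1,n}, G_{2,n} \in \Glm$ with $\W(G_{j,n}, G_0) \to 0$, $G_{1,n} \neq G_{2,n}$, and
\[
r_n := \frac{\|F(\cdot, G_{1,n}) - F(\cdot, G_{2,n})\|_\infty}{\W(G_{1,n}, G_{2,n})^{2m - 2m_0 + 1}} \longrightarrow 0.
\]
The aim is to extract from such a sequence, after a suitable rescaling and reorganization, a nontrivial finite linear combination $\sum_{p,j}\tilde\alpha_{p,j}\,F^{(p)}(x,\tilde\theta_j)$ of derivatives at distinct points $\tilde\theta_j$ whose sup-norm in $x$ vanishes, directly contradicting $2m$-strong identifiability.

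To organize $G_{1,n}-G_{2,n}$ I build a coarse-graining tree whose root represents the joint support of $G_{1,n}$ and $G_{2,n}$, whose depth-one nodes correspond to the $m_0$ support points of $G_0$ (each collecting the components of $G_{1,n}$ and $G_{2,n}$ approaching that point), and whose deeper nodes are obtained by recursively subdividing a cluster into groups of components that collapse on one another at strictly finer rates. Since $G_{1,n}, G_{2,n} \in \Glm$, the tree has boundedly many nodes and at most $2m$ leaves; up to extraction I may assume each node $\nu$ has an anchor $\theta_{\nu,n}$ with $\theta_{\nu,n}\to\bar\theta_\nu$ and each edge a well-defined scale $s_{\nu,n}\to 0$. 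Applying Taylor's formula to order $2m$ about each anchor — the last remainder being controlled by~\eqref{po} — produces an expansion
\[
F(x, G_{1,n}) - F(x, G_{2,n}) = \sum_{\nu}\sum_{p=0}^{2m} \beta_{p,\nu,n}\, F^{(p)}(x, \theta_{\nu,n}) + \rho_n(x),
\]
in which $\beta_{p,\nu,n}$ is a polynomial in the weights and in the displacements $\theta_{j,n}-\theta_{\nu,n}$ of the components attached to $\nu$, and $\rho_n$ is uniformly negligible compared with the dominant $\beta_{p,\nu,n}F^{(p)}$ term.

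The core of the proof, and the step I expect to be the main obstacle, is to show that after dividing by $\W(G_{1,n}, G_{2,n})^{2m-2m_0+1}$ the renormalized coefficients $\tilde\beta_{p,\nu,n}$ remain bounded and do not all tend to zero — even after one merges nodes sharing a common limit $\bar\theta_\nu$. This is precisely the content of Lemma~\ref{lemrec} announced in the paper: proceeding by induction from the leaves of the tree up to the root, either a low-order moment of the local signed measure at some node already yields a coefficient of the right magnitude, or the vanishing of low-order moments forces Vandermonde-type relations on local weights and positions that propagate non-degeneracy to the parent node at a higher differential order. The exponent $2m-2m_0+1$ is exactly what the worst case of this recursion produces, and the possibility that sibling edges have incommensurable rates is the case \citet{Chen} overlooked — the very reason the tree structure, and not a single common scale, is required. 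Once boundedness and non-degeneracy are secured, I extract a subsequence along which $\tilde\beta_{p,\nu,n}\to\tilde\alpha_{p,\nu}$ with some $\tilde\alpha_{p,\nu}\neq 0$, merge nodes sharing a common limit $\bar\theta_\nu$ by summing their coefficients to obtain pairwise distinct points $\tilde\theta_j$, and pass to the limit in the renormalized expansion. Combining $r_n\to 0$ with $\|\rho_n\|_\infty = o\bigl(\W(G_{1,n},G_{2,n})^{2m-2m_0+1}\bigr)$ yields
\[
\Bigl\|\sum_{p=0}^{2m}\sum_{j}\tilde\alpha_{p,j}\,F^{(p)}(x,\tilde\theta_j)\Bigr\|_\infty = 0
\]
with $(\tilde\alpha_{p,j})\neq 0$ and the $\tilde\theta_j$ distinct, contradicting $2m$-strong identifiability; hence~\eqref{local} must hold for some $\delta,\eps>0$.
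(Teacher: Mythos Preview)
Your overall architecture---contradiction, coarse-graining tree, Taylor expansions about cluster anchors with remainders controlled via~\eqref{po}, and the inductive Lemma~\ref{lemrec}---matches the paper's proof, and you correctly locate the source of the exponent $2m-2m_0+1$ in the maximal size of a depth-one cluster.

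There is, however, a real gap in your final step. You want to divide by $\W(G_{1,n},G_{2,n})^{2m-2m_0+1}$, argue that the renormalized coefficients $\tilde\beta_{p,\nu,n}$ are both bounded and not all vanishing, extract a limit, and contradict strong identifiability there. But Lemma~\ref{lemrec} only supplies the \emph{lower} bound $\max_{p,\nu}|\beta_{p,\nu,n}| \gec \W(G_{1,n},G_{2,n})^{2m-2m_0+1}$ (via items~\eqref{premierscoeffs}--\eqref{borneinfcoeff} together with the combinatorial bound on $d_\star$); it gives no matching upper bound, so nothing prevents some $\tilde\beta_{p,\nu,n}$ from blowing up along the sequence. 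Your assertion $\|\rho_n\|_\infty = o\bigl(\W^{2m-2m_0+1}\bigr)$ has the same defect, since Lemma~\ref{lemrec}\eqref{remaind} only yields $\rho_n = o\bigl(\max_{p,\nu}|\beta_{p,\nu,n}|\bigr)$.

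The paper sidesteps the limit entirely. It invokes the \emph{quantitative} form of strong identifiability, inequality~\eqref{alpha_bound} of Remark~\ref{infimum}, at each fixed $n$, applied to the depth-one anchors $\theta_{I,n}$ for $I\in\C{J_o}$; these are uniformly $\eps$-separated since they converge to distinct points, so the constant in~\eqref{alpha_bound} is uniform in $n$, and one obtains directly
\[
\bigl\|F(\cdot,G_{1,n})-F(\cdot,G_{2,n})\bigr\|_\infty \;\gec\; \max_{I\in\C{J_o}}\ \max_{k\le 2m}\bigl|\eta_{k,I}\,\eps_{\fs(I)}^k\bigr|,
\]
the remainders being absorbed by Lemma~\ref{lemrec}\eqref{remaind}. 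The tree-lemma lower bound on the right-hand side, combined with Lemma~\ref{scW}, then gives $\|F\|_\infty \gec \W^{2m-2m_0+1}$ with no limit needed. If you prefer your limit argument, normalize by $\max_{p,\nu}|\beta_{p,\nu,n}|$ rather than by $\W^{2m-2m_0+1}$: boundedness and non-degeneracy are then automatic, the remainder is $o(1)$ by~\eqref{remaind}, and $r_n\to 0$ together with the tree-lemma lower bound forces the rescaled left-hand side to vanish as well.
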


\begin{cor}
\label{global}
  Under the conditions of Theorem~\ref{orders}, there exists $\delta > 0$ such that
\begin{equation}
    \label{general}
  \inf_{\substack{G_1,G_2\in\Glm\\G_1\ne G_2}} \frac{\left\lVert F(x, G_1) - F(x, G_2) \right\rVert _{\infty}}{\W(G_1, G_2)^{2m-1}}  > \delta. 
\end{equation}
\end{cor}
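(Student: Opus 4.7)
The plan is to deduce the global inequality from the local one in Theorem~\ref{orders} by a compactness argument. Assume for contradiction that the infimum in \eqref{general} is zero, and pick a sequence $(G_{1,n},G_{2,n})$ in $\Glm\times\Glm$ with $G_{1,n}\ne G_{2,n}$ along which the ratio tends to zero. Since $\Theta$ is compact, $\Glm$ is compact in the $W$-topology (the Wasserstein topology coincides with weak convergence on the compact $\Theta$), so after extracting a subsequence we may assume $G_{i,n}\to G_i^*\in\Glm$ for $i=1,2$. The proof then splits into two cases depending on whether the limits coincide.

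In the first case, $G_1^*\ne G_2^*$. I would first argue that $2m$-strong identifiability (applied with $k=0$ in Definition~\ref{identifiability}, on the union of the supports of $G_1^*$ and $G_2^*$, which consists of at most $2m$ distinct points) forces $\|F(\cdot,G_1^*)-F(\cdot,G_2^*)\|_\infty>0$. Picking a point $x_0$ where the two distribution functions differ and using pointwise continuity of $G\mapsto F(x_0,G)$ under weak convergence (itself a consequence of continuity of $F(x_0,\cdot)$ on the compact $\Theta$), one obtains $\liminf_n\|F(\cdot,G_{1,n})-F(\cdot,G_{2,n})\|_\infty>0$. Since $W(G_{1,n},G_{2,n})$ is bounded on the compact $\Glm$, the ratio cannot tend to zero, contradicting the choice of the sequence.

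In the second case, $G_1^*=G_2^*=:G_0$, with $G_0\in\Gm{0}$ for some $m_0\in\{1,\dots,m\}$. Let $\eps$ and $\delta_0$ be the constants furnished by Theorem~\ref{orders} for this $G_0$. For $n$ large enough, both $G_{i,n}$ lie in the $\eps$-Wasserstein neighborhood of $G_0$, so
\[\|F(\cdot,G_{1,n})-F(\cdot,G_{2,n})\|_\infty\ge\delta_0\,W(G_{1,n},G_{2,n})^{2m-2m_0+1}.\]
Dividing by $W(G_{1,n},G_{2,n})^{2m-1}$ leaves a factor $W(G_{1,n},G_{2,n})^{2-2m_0}$. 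Since $m_0\ge1$ and $W(G_{1,n},G_{2,n})\to0$, this factor equals $1$ when $m_0=1$ and diverges to $+\infty$ when $m_0\ge2$. In both situations the ratio stays bounded below by $\delta_0$, contradicting its convergence to zero. Hence the infimum is positive, and any $\delta$ strictly below it works.

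The argument is largely a routine compactness-plus-localization, and the exponent $2m-1$ is the worst-case value of $2m-2m_0+1$ achieved at $m_0=1$, which is precisely why the local bounds assemble into a global one. The main subtlety I expect is in the first case: checking that mixture identifiability in the usual sense follows from $2m$-strong identifiability of the family. This is not literally the content of Definition~\ref{identifiability}, but the key observation is that combining the supports of two mixtures in $\Glm$ yields at most $2m$ distinct values, so the $k=0$ slice of the strong identifiability condition applies directly to the difference $F(\cdot,G_1^*)-F(\cdot,G_2^*)$.
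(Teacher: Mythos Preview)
Your proof is correct and follows essentially the same route as the paper: take a minimizing (or here, vanishing-ratio) sequence, pass to a convergent subsequence by compactness of $\Glm$, and split into the cases of distinct versus equal limits, invoking Theorem~\ref{orders} in the latter. The only cosmetic differences are that the paper argues directly rather than by contradiction, and in Case~2 phrases the exponent comparison as $\W^{2m-2m_0+1}\ge\W^{2m-1}$ for small $\W$ rather than isolating the factor $\W^{2-2m_0}$; your remark that identifiability of mixtures follows from the $k=0$ slice of $2m$-strong identifiability is a point the paper leaves implicit.
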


\begin{proof}[Proof of Corollary~\ref{global}]
Consider a sequence
  $(G_{1,n},G_{2,n})$ in $\Glm^2$ with $G_{1,n}\ne G_{2,n} $ for
  each $n$ and such that 
  \begin{equation}
\label{mi}
   \frac{\left\lVert F(x, G_{1,n}) - F(x, G_{2,n}) \right\rVert _{\infty}}{\W(G_{1,n}, G_{2,n})^{2m-1}}\xrightarrow[n\to\infty]{}\inf_{\substack{G_1,G_2\in\Glm\\G_1\ne G_2}} \frac{\left\lVert F(x, G_1) - F(x, G_2) \right\rVert _{\infty}}{\W(G_1, G_2)^{2m-1}} .
  \end{equation}
We can assume that 
$(G_{1,n},G_{2,n})$ converges to some limit $(G_{1,\infty},G_{2,\infty})$ in the compact set
$\Glm^2$. Distinguish two cases. 

Suppose first that $G_{1,\infty}\ne  G_{2,\infty}$.  Set
$w:=\W(G_{1,\infty}, G_{2,\infty})>0$ and  let $x_0$ such that
$z_0:=|F(x_0,G_{1,\infty})-F(x_0,G_{2,\infty})|>0$. Then, for all $n$
\begin{equation}
  \label{eq:1}
 \frac{\left\| F(x, G_{1,n}) - F(x, G_{2,n}) \right\|_{\infty}}{\W(G_{1,n}, G_{2,n})^{2m-1}}\ge \frac{\left| F(x_0, G_{1,n})- F(x_0, G_{2,n}) \right|}{\W(G_{1,n}, G_{2,n})^{2m-1}}.
\end{equation}
The numerator of the r.h.s. of \eqref{eq:1} tends to $z_0$ since $| F(x_0, G_{i,n})- F(x_0,
G_{i,\infty}) |$ is bounded by $K_0 \W(G_{i,n},G_{i,\infty})$ with $K_0=\max_{\theta\in
  \Theta}|F^{(1)}(x_0,\theta)|$  ($i=1,2$). And by
assumption, $\W(G_{1,n}, G_{2,n})$ tends to $w$.  As a
consequence,  \eqref{eq:1} and \eqref{mi} give \eqref{general}  by choosing $\delta:=z_0/w^{2m-1}$.
  
Suppose now that $G_{1,\infty}= G_{2,\infty}$. Set $G_0:= G_{1,\infty}$
which is in $\mathcal{G}_{m_0}$ with some  $m_0$ at most $m$. Consider $\eps
>0$ and $\delta>0$ as defined in \eqref{local} ; for $n$ large enough,
say $n\ge n_0$, $\W(G_{i,n},G_0)$ ($i=1,2$) is less than $\eps$ so that by \eqref{local}
\[\inf_{n\ge n_0}\frac{\left\lVert F(x, G_{1,n}) - F(x, G_{2,n}) \right\rVert _{\infty}}{\W(G_{1,n}, G_{2,n})^{2m -2m_0+1}}>\delta.\]
Moreover, for $n$ large enough, say $n \ge n_1$, $ \W(G_{1,n}, G_{2,n})$
is small so that 
$\W(G_{1,n}, G_{2,n})^{2m -2m_0+1}$ is more than $ \W(G_{1,n},G_{2,n})^{2m-1}$
and thus for all $n\ge n_0+n_1$,
\begin{equation*}
 \frac{\left\lVert F(x, G_{1,n}) - F(x, G_{2,n})
   \right\rVert _{\infty}}{\W(G_{1,n}, G_{2,n})^{2m-1}}\ge  \inf_{n\ge n_0+n_1}\frac{\left\lVert F(x, G_{1,n}) - F(x, G_{2,n})\right\rVert _{\infty}}{\W(G_{1,n}, G_{2,n})^{2m -2m_0+1}}> \delta.
\end{equation*}
\end{proof}

\begin{cor}
    \label{main}
Let $\eps >0$. Under the assumptions of Theorem~\ref{orders}, let $G_0\in\Gm0$ and $F_n$ be the empirical distribution of $n$ i.i.d. random variables with distribution $F(x,G_1)$. Let $\widehat{G}_n$ be a near optimal estimator of $G_1$ in the following sense: 
\begin{equation}
  \label{gn}
\|F(x,\widehat{G}_n)-F_n(x)\|_\infty\le \inf_{G\in \Glm}\|F(x,G)-F_n(x)\|_\infty+\frac{1}{n}.  
\end{equation}
Then, 
\[W(\widehat{G}_n,G_1)\lec \frac{1}{n^{1/(4(m-m_0)+2)}}\]
in probability under $G_1$, uniformly for $G_1\in \Glm$ such that $\W(G_1,G_0)<\eps$.
\end{cor}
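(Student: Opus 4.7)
The plan is to combine the Dvoretzky--Kiefer--Wolfowitz inequality with the two identifiability bounds (Corollary~\ref{global} and Theorem~\ref{orders}) through a bootstrap argument: a first, crude global bound places $\widehat{G}_n$ in a Wasserstein neighborhood of $G_0$, and only then can the sharper local bound be applied to deliver the optimal rate.

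First I would control the empirical process by DKW, which gives $\|F_n - F(\cdot,G_1)\|_\infty = O_P(n^{-1/2})$ with an explicit tail bound depending only on $n$, hence uniformly in $G_1$. Combined with the near-optimality of $\widehat{G}_n$ in \eqref{gn}, the triangle inequality then gives
$$\|F(\cdot,\widehat{G}_n) - F(\cdot,G_1)\|_\infty \le \|F(\cdot,\widehat{G}_n) - F_n\|_\infty + \|F_n - F(\cdot,G_1)\|_\infty \le 2\|F_n - F(\cdot,G_1)\|_\infty + \tfrac{1}{n},$$
since the infimum defining $\widehat{G}_n$ is at most $\|F(\cdot,G_1) - F_n\|_\infty$. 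Thus $\|F(\cdot,\widehat{G}_n) - F(\cdot,G_1)\|_\infty = O_P(n^{-1/2})$ uniformly in $G_1$.

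Next, Corollary~\ref{global} provides $\|F(\cdot,\widehat{G}_n) - F(\cdot,G_1)\|_\infty \gec \W(\widehat{G}_n,G_1)^{2m-1}$ uniformly over $\Glm^2$, which inverts into the crude consistency bound $\W(\widehat{G}_n,G_1) = O_P(n^{-1/(4m-2)})$. Taking $\eps$ at most half of the $\eps_0$ supplied by Theorem~\ref{orders}, we deduce
$$\W(\widehat{G}_n,G_0) \le \W(\widehat{G}_n,G_1) + \W(G_1,G_0) < \eps_0$$
for $n$ large enough with probability at least $1-\eta$, uniformly in $G_1$ with $\W(G_1,G_0) < \eps$. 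This is the warm-start step that enables the local inequality.

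On this event both $G_1$ and $\widehat{G}_n$ lie in the Wasserstein $\eps_0$-neighborhood of $G_0$, and Theorem~\ref{orders} yields a positive $\delta$ such that
$$\delta \,\W(\widehat{G}_n,G_1)^{2(m-m_0)+1} \le \|F(\cdot,\widehat{G}_n) - F(\cdot,G_1)\|_\infty = O_P(n^{-1/2}),$$
which rearranges to the announced rate $\W(\widehat{G}_n,G_1) \lec n^{-1/(4(m-m_0)+2)}$ in probability, uniformly in $G_1$. The main obstacle is precisely this bootstrap: the sharper local exponent $2(m-m_0)+1 < 2m-1$ in Theorem~\ref{orders} is only available inside a Wasserstein ball around $G_0$, so the global Corollary~\ref{global} is indispensable to guarantee a priori that $\widehat{G}_n$ enters this ball before the local lower bound can be invoked.
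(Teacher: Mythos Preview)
Your proof is correct and follows the same overall architecture as the paper: DKW plus near-optimality gives $\|F(\cdot,\widehat{G}_n)-F(\cdot,G_1)\|_\infty=O_P(n^{-1/2})$ uniformly, then consistency places $\widehat{G}_n$ in a Wasserstein ball around $G_0$, and finally Theorem~\ref{orders} converts the $n^{-1/2}$ bound into the rate $n^{-1/(4(m-m_0)+2)}$.

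The difference lies in the consistency step. The paper argues qualitatively: if $\W(\widehat{G}_n,G_1)\not\to 0$, compactness of $\Glm$ yields a subsequential limit $G_2\neq G_1$, and \eqref{cvpb} together with the Lipschitz bound $|F(x,\widehat{G}_{n_k})-F(x,G_2)|\le\max_\theta|F^{(1)}(x,\theta)|\,\W(\widehat{G}_{n_k},G_2)$ forces $F(\cdot,G_1)=F(\cdot,G_2)$, contradicting identifiability. You instead invoke Corollary~\ref{global} to obtain the quantitative crude rate $\W(\widehat{G}_n,G_1)=O_P(n^{-1/(4m-2)})$. Your route has the advantage that the constant $\delta$ in Corollary~\ref{global} is universal over $\Glm^2$, so the ``$n$ large enough'' threshold for entering the $\eps_0$-ball around $G_0$ is transparently uniform in $G_1$; the paper's compactness argument is written for fixed $G_1$ and is a bit more delicate to make uniform. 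On the other hand, the paper's argument is self-contained and does not require the separate global statement. Both proofs implicitly take the $\eps$ of the corollary small enough relative to the $\eps$ of Theorem~\ref{orders}; your explicit mention of this is appropriate.
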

\begin{proof}[Proof of Corollary~\ref{main}] We simply follow 
  \citet[Theorem 2]{Chen}. By the triangle inequality and \eqref{gn} (choose $G=G_1$), we have 
\[\|F(x,\widehat{G}_n)-F(x,G_1)\|_\infty\le 2\|F(x,G_1)-F_n(x)\|_\infty+\frac{1}{n}.\]
Moreover by the DKW inequality \citep{Ma}, we have 
\[\|F(x,G_1)-F_n(x)\|_\infty\lec \frac{1}{\sqrt{n}}, \] 
and thus 
\begin{equation}
  \label{cvpb}
\|F(x,\widehat{G}_n)-F(x,G_1)\|_\infty\lec \frac{1}{\sqrt{n}}   
\end{equation}
in probability under $G_1$, uniformly in $G_1$.

We also have $\W(\widehat{G}_n,G_1)\to 0$. Otherwise, since $\widehat{G}_n$ is in the compact space $\Glm$, there would be a subsequence $\widehat{G}_{n_k}$ which converges to some $G_2\ne G_1$ and thus we would have for all $x$:

\[|F(x, \widehat{G}_{n_k})-F(x,G_2)|\le \max_{\theta\in\Theta}|F^{(1)}(x,\theta)|\, \W(\widehat{G}_{n_k},G_2)\to 0.\]
This, together with \eqref{cvpb}, would imply $|F(x,G_1)-F(x,G_2)|=0$ for all $x$, which contradicts identifiability. 

Consequently, if $\W(G_1,G_0)<\eps$,  we have  $\W(\widehat{G}_n,G_0)<2\eps$ for $n$ large enough, and by Theorem~\ref{orders} and \eqref{cvpb},
  \[\W(\widehat{G}_n,G_1)^{2m-2m_0+1}\lec \|F(x,\widehat{G}_n)-F(x,G_1)\|_\infty\lec \frac{1}{\sqrt{n}}\]
in probability under $G_1$, uniformly in $G_1\in \Glm$ such that $\W(G_1,G_0)<\eps$. 
\end{proof}

\subsection{Proof of  the main Theorem~\ref{orders}}
In all this section, keep in mind the hypothesis of Theorem~\ref{orders}: the family $\left\{ F(x, \theta ), \theta \in \Theta  \right\} $ is
    $2m$-strongly identifiable and $F(x, \theta )$ is $2m$-differentiable with respect to
   $\theta $ for all $x$, with
    \begin{equation*}       
        F^{(2m)}(x,\theta _1) - F^{(2m)}(x,\theta _2) = o(\theta _1 - \theta _2)
    \end{equation*}
    uniformly in $x$. 
Note first that proving \eqref{local} amounts to proving 
\[\lim_{n\to \infty}\uparrow \inf_{\substack{G_1,G_2\in\Glm \\G_1\ne G_2\\ \W(G_1,G_0) \vee \W(G_2,G_0)\le 1/n} }   \frac{\left\lVert F(x, G_1) - F(x, G_2) \right\rVert _{\infty}}{\W(G_1, G_2)^{2m - 2m_0 + 1}} > \delta.\]
From now on, we  consider two sequences  $(G_{1,n}), (G_{2,n})$ in
$\Glm$  such that for each $n\ge 1$:
\begin{itemize}
\item $G_{1,n}\ne G_{2,n}$,
\item $\W(G_{i,n},G_0)\le\frac{1}{n}$ ($i=1,2$),
\item \[ \inf_{\substack{G_i\in\Glm \\G_1\ne G_2\\ \W(G_i,G_0)\le \frac{1}{n}} } \!\!\!  \frac{\left\| F(x, G_1) - F(x, G_2) \right\| _{\infty}}{\W(G_1, G_2)^{2m - 2m_0 + 1}}\ge \frac{\left\| F(x, G_{1,n}) - F(x, G_{2,n}) \right\| _{\infty}}{\W(G_{1,n}, G_{2,n})^{2m - 2m_0 + 1}}-\frac{1}{n}.\]
\end{itemize}
Consequently, it's enough to prove that 
\begin{equation}
  \label{localn}
\liminf_{n\to\infty }\frac{\left\| F(x, G_{1,n}) - F(x, G_{2,n}) \right\| _{\infty}}{\W(G_{1,n}, G_{2,n})^{2m - 2m_0 + 1}}>\delta.
\end{equation}
Since $(G_{1,n}), (G_{2,n})$ are two sequences in  $\Glm$ and $m$ is
finite, we may and  do assume that $(G_{i,n})\subset \mathcal{G}
_{m_{i}}$ for some $m_{i} \le m$  and $i=1,2$.  We can then write for
each $n$
\[G_{1,n} =\sum_{j=1}^{m_1}\pi_{1,j,n}\delta_{\theta_{1,j,n}}
   \quad\text{and}\quad G_{2,n} =\sum_{j=m_1+1}^{m_1+m_2}\pi_{2,j,n}\delta_{\theta_{2,j,n}}\]
and define for
each $n$ a signed measure $G_n$ of total mass zero:
\begin{equation*}
  G_n=G_{1,n}- G_{2,n} =\sum_{j=1}^{m_1+m_2}\pi_{j,n}\delta_{\theta_{j,n}}
\end{equation*}
with 
\[(\pi_{j,n} ,\theta_{j,n})=
\begin{cases}
(\pi_{1,j,n},\theta_{1,j,n}) & \text{for $j\in\lb 1,m_1\rb$ }\\
(- \pi_{2,j,n}, \theta_{2,j,n}) & \text{for $j\in\lb m_1+1,m_2\rb$ }
\end{cases}.
\]




\subsubsection{Scaling sequences}
\label{sec:scale}
Set for short
\[J_o=\lb 1,m_1+m_2\rb.\] 
Since $J_o$ is finite, up to selecting a subsequence of $G_n$, we may find a finite number of scaling sequences $\eps_{0,n},\eps_{1,n},\ldots,\eps_{\sm,n}$, 
together  with  integers  $\fs(j,k)$  and $\vs(J)$  in $\lb0,\sm\rb$  for any $j,k \in J_o$ and $J\subset J_o$,
such
that
\begin{align}
    0\equiv \eps_{0,n} < \eps_{1,n} & < \cdots < \eps_{\sm,n}  \equiv  1, & \text{with }\eps_{s,n} & = o\big(\eps_{s+1,n}\big) , \notag \\
\label{sjk}
 \left\lvert \theta_{j,n} - \theta_{k,n}
 \right\rvert  & \asymp \eps_{\fs(j,k),n}, && \\
\label{sJ}
 \left\lvert \sum_{j\in J}\pi_{j,n}
 \right\rvert &  \asymp \eps_{\vs(J),n}. &&
\end{align}
We also define  the $\fs$-diameter of $J$ as
\begin{align*}
    \fs(J) & = \sup_{j,k \in J} \fs(j,k).   
\end{align*}

\subsubsection{Defining a tree for the key lemmas}
\label{sec:tree}
Note that the application $\fs(\cdot,\cdot)$ defined by \eqref{sjk} is an ultrametric on $J_o$ (but does not
separate points). Thus we may define a tree $\T$ whose vertices are indexed by the distinct ultrametric closed balls
$J=B_\fs(j,s)$  when $j$ ranges over $J_o$ and $s$ over
$\lb0,\sm\rb$.

Indeed, if $I$ and $J$ are two such balls, and $I \cap J\ne \emptyset$, then  either  $I\subset J$ or $J\subset I$.
 
So that, defining the set of descendants and the set of children of $J$ by
\begin{eqnarray*}
 \mathrm{Desc}(J) &=& \{I\in\T : I\subsetneq J\}; \\
\mathrm{Child}(J) & =& \{I\in\D{J} : I\subset H\subsetneq J,H\in\T\Longrightarrow
H=I\},
\end{eqnarray*}
we get a tree $\T$ with root $J_o$, and where the parent of  $J\ne J_o$ is given by 
\[p(J)=K\iff J\in\C{K}.\]

\begin{lem}\label{scW} With the above notations, given the tree $\T$,
  \begin{equation}
\label{scaleW}
    \W\left(G_{1,n}, G_{2,n}\right)  \asymp\max_{J\in \mathrm{Desc}(J_o)} \eps_{\vs(J),n} \eps_{\fs(p(J)),n}.
\end{equation}
\end{lem}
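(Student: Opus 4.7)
The plan is to prove the bound $\W(G_{1,n}, G_{2,n}) \asymp \max_{J \in \D{J_o}} \eps_{\vs(J),n}\, \eps_{\fs(p(J)),n}$ by combining the two dual representations of the Wasserstein metric introduced in \eqref{defW} and \eqref{dualW}: the Kantorovich--Rubinstein formula will give the upper bound, while the primal representation applied to the CDF difference will give the lower bound.

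For the upper bound, the idea is to telescope $\int f\, \dd G_n$ along root-to-leaf paths in $\T$. I fix a representative $\theta_J \in J$ for every node $J \in \T$, and write $\pi_J := \sum_{j \in J} \pi_{j,n}$, so that $|\pi_J| \asymp \eps_{\vs(J),n}$ and, crucially, $\pi_{J_o} = 0$. Expanding $f(\theta_{j,n}) - f(\theta_{J_o})$ telescopically along the ancestor chain $\{j\} = L_1 \subsetneq \cdots \subsetneq L_r = J_o$ and exchanging the order of summation then yields
\[
\int f\, \dd G_n \;=\; \sum_{J \in \D{J_o}} \pi_J\, \bigl(f(\theta_J) - f(\theta_{p(J)})\bigr).
\]
When $f$ is $1$-Lipschitz, $|f(\theta_J) - f(\theta_{p(J)})| \le |\theta_J - \theta_{p(J)}| \lec \eps_{\fs(p(J)),n}$ since both points lie in $p(J)$, so each summand is $\lec \eps_{\vs(J),n}\, \eps_{\fs(p(J)),n}$. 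Because $|\T|$ is a fixed finite constant (at most $2m$), the sum is dominated by its maximum term, and the supremum over $1$-Lipschitz $f$ gives one half of \eqref{scaleW}.

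For the lower bound, I would use the primal formula $\W(G_{1,n}, G_{2,n}) = \int_\R |H_n(t)|\, \dd t$ with $H_n(t) := G_n(-\infty,t]$. Picking $J^*$ attaining the maximum on the right-hand side of \eqref{scaleW}, let $v_-, v_+$ denote the constant values of $H_n$ on the intervals of constancy immediately to the left and right of the cluster $\{\theta_{j,n} : j \in J^*\}$. Since $v_+ - v_- = \pi_{J^*}$, one of $|v_-|, |v_+|$ is at least $|\pi_{J^*}|/2 \gec \eps_{\vs(J^*),n}$. The decisive step is to show that both flanking intervals have length $\gec \eps_{\fs(p(J^*)),n}$: by the ultrametric property of $\fs$, the nearest component outside $J^*$ either sits in another child of $p(J^*)$ at distance $\asymp \eps_{\fs(p(J^*)),n}$, or belongs to a higher branch of the tree, in which case the separation is even larger. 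Integrating $|H_n|$ on whichever flanking interval carries the larger of $|v_-|, |v_+|$ then produces the required $\gec \eps_{\vs(J^*),n}\, \eps_{\fs(p(J^*)),n}$ lower bound.

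The main obstacle is the geometric bookkeeping for the lower bound, especially when $J^*$ is an extremal child of $p(J^*)$: one of the flanking intervals then extends past $p(J^*)$, and I must verify that its length is still $\gec \eps_{\fs(p(J^*)),n}$, which follows from the strict growth $\eps_{s,n} = o(\eps_{s+1,n})$. A secondary technicality is handling leaves with $\fs = 0$, where a component of $G_{1,n}$ coincides with one of $G_{2,n}$; such nodes contribute zero to both the telescoping sum and the lower-bound gap and so do not affect the argument.
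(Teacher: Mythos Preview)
Your proof is correct. The upper bound via telescoping along root-to-leaf paths is equivalent to (and arguably cleaner than) the paper's inductive version of the same identity. For the lower bound the paper stays on the dual side: it builds an explicit $1$-Lipschitz test function that vanishes on the cluster $\{\theta_j:j\in J^*\}$ and plateaus at height $\asymp \eps_{\fs(p(J^*))}$ outside, so that $\int f\,\dd G_n=|\pi_{J^*}|\cdot(\text{gap})$. Your argument via the primal formula $\int|H_n|$ encodes the same geometry---mass imbalance $\asymp\eps_{\vs(J^*)}$ across the cluster, flanking gaps of width $\gec\eps_{\fs(p(J^*))}$---and your handling of the degenerate flank (when $J^*$ is extremal, forcing $v_\pm=0$) is exactly what the test-function approach gets for free from $\pi_{J_o}=0$. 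The two routes are dual to each other and cost the same effort; the paper's explicit test function is perhaps marginally tidier in that it avoids the left/right case split.
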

\begin{proof}
  See Appendix~\ref{apscW}.
\end{proof}
Set now 
\[F(x,G_n):=F(x,G_{1,n})-F(x,G_{2,n})\]
and for $J\subset J_o$, 
\[F(x,J):=\sum_{j\in J}\pi_{j,n}F(x,\theta_{j,n}).\]
Note that $F(x,G_n)=F(x,J_o)$. 
We now use  Taylor expansions along the tree $\T$ to express the order of $F(x,G_n)$ in terms of the scaling functions $\eps_s$.

\begin{lem}\label{lemrec}
  Let $J$ be a vertex of the tree $\T$ and set $d_J=\mathrm{card}(J)$. Pick $\theta_J:=\theta_{J,n}$ in the set $\{\theta_{j,n}:j\in J\}$. The subscript $_n$ is  skipped from the following notations. There is a vector $\eta_J=(\eta_{k,J})_{0\le k\le 2m}$ and a remainder $R(x,J)$ such that 
\begin{equation}
    \label{hyprec}
    F(x,J) = \sum_{k=0}^{2m} \eta_{k,J} \eps_{\fs(J)}^k F^{(k)}(x, \theta _J) + R(x,J),
\end{equation}
where:
\begin{enumerate}[(i)]
\item \label{coeffsbornes}$\displaystyle\eta_{0,J}=\sum_{j\in
    J}\pi_j$  and $|\eta_{k,J}|\lec 1 $ for all $k\le 2m$;
 \item \label{premierscoeffs} Taking subsequences if needed, there is a coefficient
   $\eta_{k,J}$ of maximal order among the $d_J$ first
   ones. That is, there is an integer $k(J)<d_J$ such that 
            \begin{equation*}
            \|\eta_J \|:= \max_{k\le 2m} | \eta_{k,J} |
            \asymp  |\eta_{k(J),J}| ;
        \end{equation*}
\item  \label{borneinfcoeff}  The norm $\|\eta_J \|$ is bounded from below (up to a constant) by
  a quantity linked to the Wasserstein distance: 
\[\|\eta_J \|\gec \max\left(\eps _{\vs(J)},
  \max_{I\in\mathrm{Desc}(J)}\eps _{\vs(I)} \left(\frac{\eps
      _{\fs(p(I))}}{\eps_{\fs(J)}}\right) ^{d_J - 1}\right);\]
\item \label{remaind}  The remainder term is negligible. Uniformly in $x$:
 \[R(x,J) = o\left(\|\eta_J\|\, \varepsilon_{\fs(J)}^{2m}\right).\]
\end{enumerate}  
\end{lem}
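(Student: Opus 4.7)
The plan is to induct on the height of the subtree of $\T$ rooted at $J$, starting from the leaves of $\T$ and moving up to $J_o$. For a leaf $J$ the scale is $\fs(J)=0$, so $\theta_j=\theta_J$ for every $j\in J$ and $F(x,J)=\big(\sum_{j\in J}\pi_j\big)F(x,\theta_J)$. We set $\eta_{0,J}=\sum_{j\in J}\pi_j$, $\eta_{k,J}=0$ for $k\ge 1$, and $R(x,J)=0$; then (i), (ii) (with $k(J)=0<d_J$), (iii) (from $|\eta_{0,J}|\asymp\eps_{\vs(J)}$), and (iv) are immediate.

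For the inductive step, let $J$ have children $J_1,\dots,J_r$. By induction each $F(x,J_s)$ admits the claimed expansion. I would then Taylor-expand each $F^{(k)}(x,\theta_{J_s})$ around $\theta_J$ up to order $2m-k$; since $\theta_{J_s},\theta_J\in J$ we have $|\theta_{J_s}-\theta_J|\lec\eps_{\fs(J)}$, and assumption~\eqref{po} (integrated $2m-k$ times) yields a uniform-in-$x$ Taylor remainder $T_{k,s}(x)=o(\eps_{\fs(J)}^{2m-k+1})$. Substituting and regrouping by powers of $\eps_{\fs(J)}$ leads to
\[\eta_{k',J}=\sum_{s=1}^r\sum_{k+\ell=k'}\eta_{k,J_s}\left(\frac{\eps_{\fs(J_s)}}{\eps_{\fs(J)}}\right)^k\frac{(\theta_{J_s}-\theta_J)^\ell}{\ell!\,\eps_{\fs(J)}^\ell},\]
and $R(x,J)=\sum_s R(x,J_s)+\sum_{s,k}\eta_{k,J_s}\eps_{\fs(J_s)}^k T_{k,s}(x)$.

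Every ratio in the formula for $\eta_{k',J}$ is $\lec 1$, so the inductive bound $|\eta_{k,J_s}|\lec 1$ yields (i); moreover $\eta_{0,J}=\sum_s\eta_{0,J_s}=\sum_{j\in J}\pi_j$. The lower bound $\|\eta_J\|\gec\eps_{\vs(J)}$ follows at once from $\eta_{0,J}$. For $I\in\D{J}$ contained in some $J_s$, one unfolds the recursion along the path from $I$ up to $J$ in $\T$ and tracks how the mass $\sum_{j\in I}\pi_j\asymp\eps_{\vs(I)}$ propagates: through the successive Taylor regroupings it contributes to some $\eta_{k',J}$ with $k'<d_J$ a term of order $\eps_{\vs(I)}(\eps_{\fs(p(I))}/\eps_{\fs(J)})^{d_J-1}$. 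Extracting a further subsequence along which $|\eta_{k,J}|/\|\eta_J\|$ converges yields (ii) and (iii) simultaneously. For (iv), the Taylor-induced part of $R(x,J)$ is bounded uniformly in $x$ by $o\big(\eps_{\fs(J)}^{2m+1}\sum_s\|\eta_{J_s}\|\big)$ and the inherited part by $\sum_s o(\|\eta_{J_s}\|\eps_{\fs(J_s)}^{2m})$; using $\eps_{\fs(J_s)}/\eps_{\fs(J)}\to 0$ and the lower bounds on $\|\eta_J\|$ from (iii) (applied to descendants inside each $J_s$), both are $o(\|\eta_J\|\eps_{\fs(J)}^{2m})$.

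The main obstacle is to establish (ii) and (iii) simultaneously. The recursive formula mixes the inductive coefficients with small ratios $\eps_{\fs(J_s)}/\eps_{\fs(J)}$ and with normalized separations $(\theta_{J_s}-\theta_J)/\eps_{\fs(J)}$ whose limits may be arbitrary, so one must exclude cancellations that would wipe out the leading coefficient below index $d_J$ or push $\|\eta_J\|$ below its claimed lower bound. The key is a Vandermonde argument: in the limiting normalized separations, the principal block of the linear map sending the leading parts of $(\eta_{k,J_s})_s$ to $(\eta_{0,J},\dots,\eta_{d_J-1,J})$ is a Vandermonde matrix with distinct entries, hence invertible. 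This forces the leading coefficient of $\eta_J$ to stay among the first $d_J$ coordinates and furnishes the quantitative descendant bound in (iii).
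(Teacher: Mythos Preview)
Your inductive scheme, the Taylor expansion of each child's representation around $\theta_J$, and the recursive formula for $\eta_{k',J}$ are exactly the paper's approach. Two points need tightening.

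First, the linear map you invoke is not an ordinary Vandermonde. Each child $J_s$ contributes $d_{J_s}$ columns (one per power $\ell=0,\dots,d_{J_s}-1$), and the $(k',\ell)$ block for child $J_s$ has entries $h_{J_s}^{k'-\ell}/(k'-\ell)!\,\1_{k'\ge\ell}$ with $h_{J_s}=(\theta_{J_s}-\theta_J)/\eps_{\fs(J)}$. This is a \emph{confluent} Vandermonde; its invertibility (uniformly over $\eps$-separated $h_{J_s}$) is what the paper proves separately in its auxiliary matrix lemma. Your sketch hides this under the word ``Vandermonde''.

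Second, and more seriously, your Taylor-induced remainder bound $o\big(\eps_{\fs(J)}^{2m+1}\sum_s\|\eta_{J_s}\|\big)$ does not reduce to $o(\|\eta_J\|\eps_{\fs(J)}^{2m})$ via (iii). What (iii) and the confluent-Vandermonde comparison give is $\|\eta_{J_s}\|\lec \|\eta_J\|\big(\eps_{\fs(J)}/\eps_{\fs(J_s)}\big)^{d_{J_s}-1}$, so your bound becomes $o\big(\|\eta_J\|\,\eps_{\fs(J)}^{2m+d_{J_s}}/\eps_{\fs(J_s)}^{d_{J_s}-1}\big)$, and $\eps_{\fs(J)}^{d_{J_s}}/\eps_{\fs(J_s)}^{d_{J_s}-1}$ can blow up whenever $d_{J_s}\ge 2$ (e.g.\ $\eps_{\fs(J)}=n^{-1}$, $\eps_{\fs(J_s)}=n^{-3}$, $d_{J_s}=2$). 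The fix is the paper's: do not discard the factor $\eps_{\fs(J_s)}^k$ when bounding $\sum_{s,k}\eta_{k,J_s}\eps_{\fs(J_s)}^kT_{k,s}$. Instead write $\eta_{k,J_s}\eps_{\fs(J_s)}^k=\alpha_{k,J_s}\eps_{\fs(J)}^k$ with $\alpha_{k,J_s}=\eta_{k,J_s}(\eps_{\fs(J_s)}/\eps_{\fs(J)})^k$, so the Taylor part is $o(\eps_{\fs(J)}^{2m})\sum_{s,k}|\alpha_{k,J_s}|$. Then use the inductive (ii) on $J_s$ to show $\max_k|\alpha_{k,J_s}|\asymp\max_{k<d_{J_s}}|\alpha_{k,J_s}|$, and the confluent-Vandermonde bound (your ``principal block'' argument, now applied at the level of $J$) to get $\max_{s,\,k<d_{J_s}}|\alpha_{k,J_s}|\asymp\max_{k<d_J}|\eta_{k,J}|\asymp\|\eta_J\|$. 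This yields the required $o(\|\eta_J\|\eps_{\fs(J)}^{2m})$. Your treatment of the inherited part $\sum_s o(\|\eta_{J_s}\|\eps_{\fs(J_s)}^{2m})$ is fine once this same comparison $\|\eta_{J_s}\|\eps_{\fs(J_s)}^{d_{J_s}-1}\lec\|\eta_J\|\eps_{\fs(J)}^{d_{J_s}-1}$ is in hand.
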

\begin{proof}
  See Appendix~\ref{aplemrec}.
\end{proof}

\subsubsection{Concluding the proof}
Let us now consider the root $J_o$ of the tree $\T$. Distinguish two cases:
\begin{description}
\item[Case 1.] Assume that $\fs(J_o)<\sm$. We have $\eps_{\fs(J_o)}=o(1)$ and may apply directly Lemma \ref{hyprec} to $J_o$:
\[F(x,G_n) = F(x,J_o)=\sum_{k=0}^{2m} \eta_{k,J_o} \eps _{\fs(J_o)}^k
F^{(k)}(x, \theta _{J_o}) + R(x,J_o) , \]
where at least one $\eta_{k,J_o}$ satisfies 
\begin{eqnarray*}
 |\eta_{k,J_o}|&\gec&
 \max_{I\in\D{J_o}}\eps_{\vs(I)}\left(\frac{\eps_{\fs(p(I))}}{\eps_{\fs(J_o)}}\right)^{d_{J_o}-1}\gec \max_{I\in\D{J_o}}\eps_{\vs(I)}\left(\frac{\eps_{\fs(p(I))}}{\eps_{\fs(J_o)}}\right)^{2m-1} ,
\end{eqnarray*}
so that one of the coefficients of the derivatives satisfies 
\[|\eta_{k,J_o}\eps _{\fs(J_o)}^k|\gec \max_{I\in\D{J_o}}\eps_{\vs(I)}
\eps_{\fs(p(I))}^{2m-1}.\]
Thus, taking $i=1$ in the lower bound  \eqref{alpha_bound}, and since $R(x,J_o)$ is
of smaller order, we get 
\[\left\lVert F(x,G_n) \right\rVert _{\infty}\gec  \max_{I\in\D{J_o}}\eps_{\vs(I)}
\eps_{\fs(p(I))}^{2m-1}\gec \W(G_{1,n},G_{2,n})^ {2m-1} , \]
where the last inequality comes from Lemma~\ref{scW}.
\item[Case 2.]  Assume that $\fs(J_o)=\sm$. We split $G_n$ over the
  first-generation children:
  \begin{eqnarray*}
  F(x,G_n) = F(x,J_o)&=&\sum_{I\in\C{J_o}}F(x,I)\\
&=& \sum_{I\in\C{J_o}}\left[\sum_{k=0}^{2m} \eta_{k,I} \eps _{\fs(I)}^kF^{(k)}(x, \theta _{I}) + R(x,I)  \right].
  \end{eqnarray*}
Moreover the $\theta _{I}$ for
$I\in\C{J_o}$ are $\eps$-separated for some $\eps >0$ (see \eqref{sep}), so that the
lower bound  \eqref{alpha_bound} can be applied and yields, since the
$R(x,I)$'s are negligible:
\begin{equation*}
 \left\| F(x,G_n) \right\| _{\infty}\gec
  \max_{I\in\C{J_o}}\max_{k\le 2m}|\eta_{k,I}\eps _{\fs(I)}^k|\ge
  \max_{I\in\C{J_o}}\max_{k<d_I}|\eta_{k,I}\eps _{\fs(I)}^k|. 
\end{equation*}
On the one hand, we have $\max_{k<d_I}|\eta_{k,I}\eps _{\fs(I)}^k|\ge |\eta_{0,I}|$ and since $|\eta_{0,I}|=|\sum_{j\in I}\pi_j|\asymp \eps_{\vs(I)}$, we deduce 
\[\left\lVert F(x,G_n) \right\rVert _{\infty}\gec
\max_{I\in\C{J_o}}\eps_{\vs(I)}.\] 
On the other hand, we have $\max_{k<d_I}|\eta_{k,I}\eps _{\fs(I)}^k|\ge \max_{k<d_I}|\eta_{k,I}|\eps _{\fs(I)}^{d_I-1}$ so that from Lemma~\ref{lemrec} \eqref{premierscoeffs} and \eqref{borneinfcoeff} for $I$, we deduce further 
\begin{eqnarray*}
  \left\lVert F(x,G_n) \right\rVert _{\infty} &\gec&
  \max_{I\in\C{J_o}}\|\eta_I\|\eps _{\fs(I)}^{d_I-1}\\
&\gec&
  \max_{I\in\C{J_o}}\max_{H\in\D{I}}\eps_{\vs(H)}
\eps_{\fs(p(H))}^{d_I-1}.
\end{eqnarray*}
After recalling that $\eps_{\fs(J_o)}=1$ and setting $d_\star=\max_{I\in\C{J_o}}d_I$, we may combine these two lower bounds and get
\begin{eqnarray*}
  \left\lVert F(x,G_n) \right\rVert _{\infty}&\gec&  \max_{I\in\C{J_o}}\max_{H\in\D{I}\cup\{I\}}\eps_{\vs(H)}
\eps_{\fs(p(H))}^{d_I-1}\\
&\gec&
\max_{H\in\D{J_o}}\eps_{\vs(H)}\eps_{\fs(p(H))}^{d_\star-1}\\
&\gec& \W(G_{1,n},G_{2,n})^ {d_\star-1} ,
\end{eqnarray*}
where the last inequality comes from Lemma~\ref{scW}. Since $G_{1,n}$ and $G_{2,n}$ converge to $G_0\in \Gm0$, the root $J_o$ (of cardinality $m_1+m_2$) has at least $m_0$ children with at least two elements. Thus, the cardinality $d_\star$ of the biggest child is bounded by $m_1+m_2-2(m_0-1)$. Thus, 
\[ \left\lVert F(x,G_n) \right\rVert _{\infty}\gec \W(G_{1,n},G_{2,n})^ {m_1+m_2-2m_0+1}\gec \W(G_{1,n},G_{2,n})^ {2m-2m_0+1}.\]
\end{description}
Finally, if $m_0$ is more than one, we are in the second case (where $\fs(J_o)=\sm$) and if $m_0$ is one, the two cases can occur. But whatever the case, we always have 
\[\left\lVert F(x,G_n) \right\rVert _{\infty}\gec \W(G_{1,n},G_{2,n})^ {2m-2m_0+1}\]
so that \eqref{localn} is proved.

\section{A class of $k$-strongly identifiable families}
\label{sec:class}

We expect the strong identifiability to be rather generic, and hence the above theory often meaningful. In particular, \citet[Theorem~3]{Chen} has proved that 
location and scale families with smooth densities are $2$-strongly identifiable. The theorem and the proof straightforwardly generalise to our case. We merely state the result.
\begin{thm}
    \label{thm_identifiability}
Let $k\ge 1$. Let $f$ be a probability density with respect to to the Lebesgue measure. Assume that $f$ is $k-1$ times differentiable with 
\[\lim_{x\to\pm \infty}f^{(p)}(x)=0\text{ for } p\in \lb 0,k-1\rb.\]
Set $F(x,\theta)=\int_{-\infty}^x f(y-\theta)dy$. Then the family $\{F(x,\theta),\theta\in\Theta\}$ is $k$-strongly identifiable.  If $\Theta\subset (0,\infty)$, the result stays true with $F(x,\theta)=\frac{1}{\theta}\int_{-\infty}^x f\left(\frac{y}{\theta}\right)dy$.
\end{thm}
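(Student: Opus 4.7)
The plan is a Fourier-analytic argument extending Chen's proof of the $k = 2$ case to arbitrary $k$. I treat the location family in detail; the scale family will be handled by an analogous argument (or, when $f$ is supported on $(0,\infty)$, by reducing to the location case via the substitution $u = \log x$, $\eta = \log \theta$).

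First I compute the derivatives explicitly. Writing $\tilde F(z) := \int_{-\infty}^z f(y)\,\dd y$, one has $F(x,\theta) = \tilde F(x-\theta)$ and, by differentiating under the integral,
\[
F^{(p)}(x,\theta) = (-1)^p f^{(p-1)}(x-\theta), \qquad p = 1, \dots, k,
\]
each $f^{(p-1)}$ being well defined as a function by the regularity assumption. Substituting into the identifiability equation, the hypothesis becomes
\[
T(x) := \sum_{j=1}^m \alpha_{0,j}\, \tilde F(x-\theta_j) + \sum_{j=1}^m \sum_{p=1}^k (-1)^p \alpha_{p,j}\, f^{(p-1)}(x-\theta_j) = 0
\]
for every $x \in \mathbb{R}$. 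Since $T$ is bounded, I regard it as a tempered distribution and take its Fourier transform in $x$. Using $\widehat{\tilde F}(\xi) = \pi\,\delta(\xi) + \hat f(\xi)/(i\xi)$, a distributional consequence of $\tilde F' = f$, and multiplying the identity $\hat T = 0$ through by $i\xi$ to absorb the Dirac mass at the origin, I obtain the continuous equality
\[
\hat f(\xi) \sum_{j=1}^m Q_j(\xi)\, e^{-i\xi\theta_j} = 0 \quad \text{for all } \xi \in \mathbb{R},
\]
where $Q_j(\xi) := \alpha_{0,j} + \sum_{p=1}^k (-1)^p \alpha_{p,j} (i\xi)^p$ is a polynomial in $\xi$ of degree at most $k$.

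Next, continuity of $\hat f$ together with $\hat f(0) = 1$ guarantees $\hat f \neq 0$ on some open neighborhood of the origin; on that neighborhood $\sum_j Q_j(\xi) e^{-i\xi\theta_j} = 0$. Being entire in $\xi$, this exponential polynomial then vanishes on all of $\mathbb{C}$. I would now invoke the classical rigidity of exponential polynomials with distinct exponents, proved by a short induction on $m$: multiplying by $e^{i\xi\theta_m}$ and differentiating $\deg Q_m + 1$ times in $\xi$ kills the constant summand while leaving an exponential polynomial in the remaining $m-1$ distinct exponents, whose polynomial coefficients keep the same degrees. Iterating forces every $Q_j \equiv 0$, hence all its coefficients vanish, giving $\alpha_{p,j} = 0$ for every $p = 0,\dots,k$ and $j = 1,\dots,m$, which is precisely $k$-strong identifiability.

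The main technical obstacle will be the distributional Fourier step: because $\tilde F$ is only bounded, $\widehat{\tilde F}$ is a tempered distribution carrying a Dirac mass at zero, so the manipulation of $\hat T = 0$ must be carried out distributionally until the factor $i\xi$ turns it into a genuine continuous equation; the subsequent exponential-polynomial argument is then routine. For the scale family on $\Theta \subset (0,\infty)$ with $f$ supported on $(0,\infty)$, the substitution $u = \log x$, $\eta_j = \log \theta_j$ transforms $F(x,\theta_j) = \tilde F(x/\theta_j)$ into $G(u - \eta_j)$ with $G(v) := \tilde F(e^v)$, and the chain rule $\partial_\eta = \theta\,\partial_\theta$ provides an invertible (upper-triangular in the $k$-jet) change of coordinates on the coefficients $\alpha_{p,j}$, so non-triviality is preserved and the location case concludes; the general scale case goes through by the same Fourier/characteristic-function machinery with translation replaced by dilation.
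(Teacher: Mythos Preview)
The paper does not actually prove this theorem: it simply refers to \citet[Theorem~3]{Chen} for the case $k=2$ and asserts that ``the theorem and the proof straightforwardly generalise to our case.'' Your Fourier-analytic argument is precisely Chen's method, so for the location family you are doing exactly what the paper has in mind, and the argument is sound. The distributional subtlety you flag is genuine but harmless: multiplying $\hat T=0$ by $i\xi$ is the same as taking $T'=0$ first and then Fourier-transforming, and $T'$ splits as an $L^1$ piece (the $p=0$ terms) plus distributional derivatives of bounded functions, whose transforms are the continuous function $\hat f(\xi)\sum_j Q_j(\xi)e^{-i\xi\theta_j}$; a continuous function that vanishes as a distribution vanishes pointwise, and your exponential-polynomial induction then finishes.

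The scale-family part is where your proposal falls short of a complete proof. The log-substitution $u=\log x$, $\eta=\log\theta$ is only available under the extra hypothesis that $f$ is supported on $(0,\infty)$, which the theorem does not assume; and your fallback sentence (``the general scale case goes through by the same machinery with translation replaced by dilation'') is only a pointer, not an argument. To match Chen's proof you should work directly with the scaling relation $\widehat{\theta^{-1}f(\cdot/\theta)}(\xi)=\hat f(\theta\xi)$, compute $\partial_\theta^p$ of $\tilde F(x/\theta)$ (or, after one $x$-derivative, of the density), and show that the resulting identity forces an exponential-polynomial-type equation in the variable $\theta\xi$ with distinct ``frequencies'' $\theta_1,\dots,\theta_m$. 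The algebra is messier than in the location case (the chain rule mixes powers of $x/\theta$ into the coefficients), but it is the actual content of Chen's scale argument and should be written out rather than asserted.
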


\appendix
\section{Auxiliary Proofs}
\label{app}
\subsection{Proof of Equation \eqref{Jac}}
\label{jaco}
The map 
\[\phi :(\pi_1,\ldots,\pi_d,\theta_1,\ldots,\theta_d)\mapsto 
\left(\sum_1^d\pi_j,\sum_1^d\pi_j\theta_j,\sum_1^d\pi_j\theta_j^2,\ldots,\sum_1^d\pi_j\theta_j^{2d-1}\right)\]
has the following Jacobian : 
\[
J(\phi)=(-1)^{\frac{(d-1)d}{2}}\,\pi_1\cdots\pi_d \prod_{1\le j<k\le d}(\theta_j-\theta_k)^4.  \]
To prove this, note that
\begin{eqnarray*}
 J(\phi) &= &
\begin{vmatrix}
 1 & \cdots & 1 & 0 & \cdots & 0\\
\theta_1 &  \cdots & \theta_d & \pi_1& \cdots & \pi_d\\
\theta_1^2 &  \cdots & \theta_d^2 & 2\pi_1\theta_1& \cdots & 2\pi_d\theta_d\\
\vdots &            & \vdots & \vdots &        & \vdots \\ 
\theta_1^{2d-1} &  \cdots & \theta_d^{2d-1} & (2d-1)\pi_1\theta_1^{2d-2}& \cdots & (2d-1)\pi_d\theta_d^{2d-2}
\end{vmatrix}\\
&=& \pi_1\cdots\pi_d \,\Delta_d
\end{eqnarray*}
with 
\begin{eqnarray*}
\Delta_d &=& \begin{vmatrix}
 1 & \cdots & 1 & 0 & \cdots & 0\\
\theta_1 &  \cdots & \theta_d & 1& \cdots & 1\\
\theta_1^2 &  \cdots & \theta_d^2 & 2\theta_1& \cdots & 2\theta_d\\
\vdots &            & \vdots & \vdots &        & \vdots \\ 
\theta_1^{2d-1} &  \cdots & \theta_d^{2d-1} & (2d-1)\theta_1^{2d-2}& \cdots & (2d-1)\theta_d^{2d-2}
\end{vmatrix}\\
& = & \begin{vmatrix}
 1 & \cdots & 1 & 0 & \cdots & 0\\
\theta_1 &  \cdots & \theta_d & 1& \cdots & 1\\
\theta_1^2 &  \cdots & \theta_d^2 & 2\theta_1& \cdots & 2\theta_d\\
\vdots &            & \vdots & \vdots &        & \vdots \\ 
\theta_1^{2d-2} &  \cdots & \theta_d^{2d-2} & (2d-2)\theta_1^{2d-3}& \cdots & (2d-2)\theta_{d-1}^{2d-3}\\
P(\theta_1) &  \cdots & P(\theta_d) & P'(\theta_1)& \cdots & P'(\theta_d)
\end{vmatrix},\\
\end{eqnarray*}
where $P$ can be any (normalized) polynomial of degree $2d-1$ and $P'$ its derivative. Choosing 
$P(\theta)=(\theta-\theta_d)\prod_{1\le j\le d-1}(\theta-\theta_j)^2$, we get 
\begin{eqnarray*}\Delta_d&=& P'(\theta_d) \begin{vmatrix}
 1 & \cdots & 1 & 0 & \cdots & 0\\
\theta_1 &  \cdots & \theta_d & 1& \cdots & 1\\
\theta_1^2 &  \cdots & \theta_d^2 & 2\theta_1& \cdots & 2\theta_{d-1}\\
\vdots &            & \vdots & \vdots &        & \vdots \\ 
\theta_1^{2d-2} &  \cdots & \theta_d^{2d-2} & (2d-2)\theta_1^{2d-3}& \cdots & (2d-2)\theta_{d-1}^{2d-3}
\end{vmatrix}\\
&=&  P'(\theta_d) \begin{vmatrix}
 1 & \cdots & 1 & 0 & \cdots & 0\\
\theta_1 &  \cdots & \theta_d & 1& \cdots & 1\\
\theta_1^2 &  \cdots & \theta_d^2 & 2\theta_1& \cdots & 2\theta_{d-1}\\
\vdots &            & \vdots & \vdots &        & \vdots \\ 
\theta_1^{2d-3} &  \cdots & \theta_{d-1}^{2d-3} & (2d-3)\theta_1^{2d-4}& \cdots & (2d-3)\theta_{d-1}^{2d-4}\\
Q(\theta_1) &  \cdots & Q(\theta_d) &Q'(\theta_1)& \cdots & Q'(\theta_{d-1})
\end{vmatrix},\\
\end{eqnarray*}
where $Q$ is any polynomial of degree $2d-2$. With
$\displaystyle Q(\theta)=\prod_{1\le j\le d-1}(\theta-\theta_j)^2$, we obtain
\begin{equation*}
\Delta_d=(-1)^{d-1} P'(\theta_d) Q(\theta_d)\Delta_{d-1}=(-1)^{d-1} \prod_{j=1}^{d-1}(\theta_d-\theta_j)^4 \Delta_{d-1}.
\end{equation*} 
By iteration, we get 
\begin{eqnarray*}
 \Delta_d &=&(-1)^{d-1} \prod_{j=1}^{d-1}(\theta_d-\theta_j)^4
(-1)^{d-2}
\prod_{j=1}^{d-2}(\theta_{d-1}-\theta_j)^4\Delta_{d-2}\\
&=&
(-1)^{d-1+d-2+\cdots+1}\prod_{k=2}^d\prod_{j=1}^{k-1}(\theta_k-\theta_j)^4
\Delta_1 \\
&=&
(-1)^{\frac{(d-1)d}{2}}\prod_{1\le j<k\le d}(\theta_k-\theta_j)^4
\end{eqnarray*}
since $\Delta_1=1$. The proof is complete.

\subsection{Auxiliary matrix tool}

\begin{lem}
    \label{indep}
    Suppose $j$, $d_i$ and $d$ are positive integers such that
    $\sum_{i=1}^j d_i = d$. Consider numbers $\theta_1,\cdots,\theta_j$ all
    distinct. Write 
\[\mathcal{I} =\left\{ (i,\ell) \in \mathbb{N} : 1
      \le i \le j, 1 \le \ell \le d_i\right\}.\]
 Define for each $(i,\ell)\in\mathcal{I}$ a 
    $d$-dimensional column vector as follows:
\[ a_{i,\ell}[k]  =\dfrac{\theta _i^{k-\ell}}{(k-\ell)!}\1_{k\ge \ell},\quad 
1\le k\le d, 
\]
and stack these vectors in a $d\times d$ matrix 
\begin{equation}
  \label{A}
A(\theta_1,\ldots,\theta_j)=\left[ a_{1,1} | \dots |a_{1,d_1}|\dots |a_{j,1}| \dots | a_{j,d_j} \right]. 
\end{equation}
Then, the rank of $A(\theta_1,\ldots,\theta_j)$ is $d$.
\end{lem}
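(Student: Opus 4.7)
The plan is to establish the equivalent statement that the transpose $A^{T}$ has trivial kernel; since $A$ is square of size $d\times d$, this will force $\mathrm{rank}(A)=d$. Concretely, I will attach to any $w=(w_1,\ldots,w_d)^{T}\in\ker A^{T}$ a polynomial $Q$ of degree at most $d-1$, and read the vanishing conditions off as prescribed vanishing orders of $Q$ at the points $\theta_i$. The conclusion then reduces to a root-counting contradiction, which is the confluent Vandermonde / Hermite interpolation argument.

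Set
\[
Q(x)=\sum_{k=1}^{d}w_k\,\frac{x^{k-1}}{(k-1)!},
\]
so $Q$ has degree at most $d-1$. Differentiating $\ell-1$ times yields
\[
Q^{(\ell-1)}(x)=\sum_{k=\ell}^{d}w_k\,\frac{x^{k-\ell}}{(k-\ell)!}.
\]
But the $(i,\ell)$-th coordinate of $A^{T}w$ is exactly this sum evaluated at $\theta_i$. Hence the condition $A^{T}w=0$ translates into
\[
Q^{(\ell-1)}(\theta_i)=0\quad\text{for all }(i,\ell)\in\mathcal{I},
\]
that is, $\theta_i$ is a zero of $Q$ of multiplicity at least $d_i$ for each $i=1,\ldots,j$.

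Since the $\theta_i$ are pairwise distinct, these are distinct roots, contributing a total multiplicity of $\sum_{i=1}^{j}d_i=d$. A nonzero polynomial of degree at most $d-1$ cannot have $d$ roots counted with multiplicity, so $Q\equiv 0$. Linear independence of the monomials $1,\,x,\,x^{2}/2!,\,\ldots,\,x^{d-1}/(d-1)!$ then forces $w_1=\cdots=w_d=0$, giving $\ker A^{T}=\{0\}$ and therefore $\mathrm{rank}(A)=d$. There is no real obstacle: once one recognises that the column $a_{i,\ell}$ is obtained by $(\ell-1)$-fold differentiation of the ``exponential-type'' column $a_{i,1}=\bigl(\theta_i^{k-1}/(k-1)!\bigr)_{k}$ with respect to $\theta_i$, the polynomial/Hermite interpolation translation is immediate and the root count closes the argument.
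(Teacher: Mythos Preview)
Your proof is correct. Both your argument and the paper's rest on the Hermite/confluent Vandermonde interpretation of the matrix, but the execution differs. The paper shows $\ker A=\{0\}$: for $A\Lambda=0$ it observes that $c^{T}A\Lambda=\sum_{(i,\ell)}\lambda_{i,\ell}P^{(\ell-1)}(\theta_i)$ whenever $P(x)=\sum_k c_k x^k/k!$, and then constructs explicit polynomials $P_{k,q}(x)=(x-\theta_k)^{d_k-q}\prod_{i\neq k}(x-\theta_i)^{d_i}$ to peel off the coordinates $\lambda_{k,d_k-q+1}$ one at a time by induction on $q$. You instead show $\ker A^{T}=\{0\}$: attaching the polynomial $Q(x)=\sum_k w_k x^{k-1}/(k-1)!$ to $w$, the system $A^{T}w=0$ forces $Q$ to have each $\theta_i$ as a root of multiplicity $d_i$, and a degree count ($\deg Q\le d-1$ versus $\sum_i d_i=d$ roots) kills $Q$ in one stroke. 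The two arguments are dual to each other; yours is shorter and avoids the inductive construction, while the paper's explicit polynomials make the elimination completely hands-on.
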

\begin{proof}
 Set for short $A= A(\theta_1,\ldots,\theta_j)$. Let
$\Lambda=(\lambda_{i,\ell})_{(i,\ell)\in\mathcal{I}}  $ be a vector such
that $A \Lambda  = 0$. Proving the lemma is equivalent to proving that
$\Lambda = 0$. Note that 
\begin{equation}
  \label{AL}
  (A \Lambda)_k=\sum_{(i,\ell) \in \mathcal{I} } \lambda
  _{i,\ell}a_{i,\ell}[k]= \sum_{i=1}^j \sum_{\ell=1}^{d_i}\lambda
  _{i,\ell}\dfrac{\theta _i^{k-\ell}}{(k-\ell)!}\1_{k\ge \ell},
\end{equation}
and for any $(d-1)$-degree polynomial $P(x) = \sum_{k=0}^{d-1}
c_k\frac{x^k}{k!}$ , we have
\begin{equation}
  \label{eq:P}
 (c_0, \dots, c_{d-1}) A \Lambda = \sum_{k=0}^{d-1}c_k (A \Lambda)_{k+1}=\sum_{(i,\ell) \in \mathcal{I} } \lambda _{i,\ell} P^{(\ell-1)}(\theta _i).
\end{equation} 
Hence, if $A \Lambda = 0$, then \eqref{eq:P} is zero. In particular,
the $(d-1)$-degree polynomials 

\[P_k(x) = (x-\theta_k)^{d_k-1}\prod_{\substack{i=1\\i\ne k}}^j (x-\theta_i)^{d_i},\quad 1\le k\le j,\]
yield
\[\sum_{(i,\ell)\in \mathcal{I}} \lambda _{i,\ell}  P_k^{(\ell-1)}(\theta_i) = \lambda _{k, d_k} (d_k-1)!\prod_{\substack{i=1\\i\ne k}}^j (\theta_k-\theta_i)^{d_i}=0,\]
so that $\lambda _{k, d_k} = 0$. More generally,
\[P_{k,q} (x) =(x-\theta_k)^{d_k-q}\prod_{\substack{i=1\\i\ne k}}^j (x-\theta_i)^{d_i}\quad (1\le q\le d_k)\]
 has the property that, for $(i,\ell) \in \mathcal{I} $, $P_{k,q}
 ^{(\ell-1)}(\theta_i) $ is zero if $i \ne k$ or $\ell\le d_k-q $. On the
 other hand this term is never zero if $i=k$ and $\ell = d_k-q+1$. So
 that recurrence on $q$ yields $\lambda _{k, d_k-q+1} = 0$, and hence
 $\Lambda = 0$. 
\end{proof}

\begin{defin}
Let $\eps>0$. A vector $(\theta _i)_{1\le i\le j}$ in $\Theta^j$ is said
  $\eps$-separated if 
  \[\forall i\ne i',\quad |\theta_i-\theta_{i'}|\ge \eps.\]
\end{defin}

\begin{cor}
    \label{memenorme}
Let $\eps >0$.  There exist constants $c>0$ and $C>0$  such
that for any vector $\Lambda $ and any $\eps$-separated vector $(\theta _i)_{1\le i\le j}$,
    \[
        c \left\lVert \Lambda  \right\rVert \le  \left\lVert A(\theta_1,\ldots,\theta_j) \Lambda  \right\rVert \le C \left\lVert \Lambda  \right\rVert,
    \]
where $A(\theta_1,\ldots,\theta_j)$ is as in \eqref{A}.
\end{cor}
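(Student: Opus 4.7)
The plan is a compactness argument that promotes the pointwise non-degeneracy of $A(\theta_1,\ldots,\theta_j)$ given by Lemma~\ref{indep} to a uniform bound over the compact set of $\eps$-separated parameters. The key observation is that the map $(\theta_1,\ldots,\theta_j) \mapsto A(\theta_1,\ldots,\theta_j)$ is continuous from $\Theta^j$ into $\mathbb{R}^{d \times d}$, since every entry of $A$ is a scaled monomial in the $\theta_i$'s.

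For the upper bound, I would simply note that the entries of $A$, of the form $\theta_i^{k-\ell}/(k-\ell)!$ with $\theta_i$ in the compact set $\Theta$, are uniformly bounded. Hence the operator norm $\|A\|_{\mathrm{op}}$ is bounded by some $C$ uniformly over $(\theta_1,\ldots,\theta_j) \in \Theta^j$, giving $\|A \Lambda\| \le C \|\Lambda\|$ with no need of the $\eps$-separation hypothesis.

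For the lower bound, I would introduce the smallest singular value
\[\Phi(\theta_1,\ldots,\theta_j) := \inf_{\|\Lambda\|=1} \|A(\theta_1,\ldots,\theta_j)\Lambda\|.\]
Continuity of $\Phi$ on $\Theta^j$ follows from the continuity of $(\theta_i) \mapsto A$ combined with the standard fact that the infimum of a jointly continuous function over a fixed compact set (the unit sphere) is continuous in the parameter. By Lemma~\ref{indep}, $A$ has rank $d$ whenever the $\theta_i$ are pairwise distinct, so $\Phi > 0$ on the set of distinct-coordinate configurations. The set of $\eps$-separated vectors in $\Theta^j$ is defined by finitely many closed conditions $|\theta_i - \theta_{i'}| \ge \eps$, hence is a closed subset of the compact set $\Theta^j$, hence compact. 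On this compact set $\Phi$ is strictly positive and continuous, so it attains its infimum at some value $c > 0$. Dividing by $\|\Lambda\|$ for $\Lambda \neq 0$ then gives $\|A \Lambda\| \ge c \|\Lambda\|$ uniformly in $\Lambda$ and in $\eps$-separated $(\theta_i)$.

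There is no substantive obstacle here; the content of the corollary is precisely the uniform version of Lemma~\ref{indep}, and the $\eps$-separation assumption is used exactly to carve out the compact set on which positivity is uniform (without it, one can make two $\theta_i$'s collide and force $\Phi$ down to zero).
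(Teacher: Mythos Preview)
Your proof is correct and follows essentially the same approach as the paper: a compactness argument using continuity of the map $(\theta_i,\Lambda)\mapsto \|A(\theta_1,\ldots,\theta_j)\Lambda\|$ on the compact set of $\eps$-separated parameters times the unit sphere, with positivity at the minimizer supplied by Lemma~\ref{indep}. The only cosmetic difference is that you factor the argument through the smallest singular value $\Phi$, whereas the paper takes the infimum over the product space $\mathcal{D}_\eps\times S(0,1)$ directly.
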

\begin{proof}
 Note first that the set
 $\mathcal{D}_\eps$  of all  $\eps$-separated family
 $\{\theta _i\}_{1\le i\le j}$ is compact in $\Theta^j$. Moreover the norm $\|A(\theta_1,\ldots,\theta_j) \Lambda\|$
 is a continuous function of $((\theta_1,\ldots,\theta_j),\Lambda)$ on the compact space $\mathcal{D}_\eps\times S(0,1)$ where $S(0,1)$ is the  $d$-dimensional unit sphere.  Its infimum is attained on
 $\mathcal{D}_\eps\times S(0,1)$, say at $\left((\theta^* _i)_{1\le i
   \le j},\Lambda^*\right)$ . Now, by Lemma \ref{indep}, $c:=\|A(\theta_1^*,\ldots,\theta_j^*) \Lambda^*\|$ is positive so that $ c \left\lVert
   \Lambda  \right\rVert \le  \left\lVert A(\theta_1,\ldots,\theta_j) \Lambda  \right\rVert $
 for every $\Lambda$ and every $(\theta _i)_{1\le i\le j}$ in $\mathcal{D}_\eps$ .

    Conversely, $C$ is easily bounded from above by the sum of the norms of the matrix entries, and all those are bounded since $\Theta $ is compact.
\end{proof}

\subsection{Proof of Lemma~\ref{scW}}
\label{apscW}

We shall estimate $\W(G_{1,n},G_{2,n})$ with the comparison
scale and  the tree $\T$. 
 Set  for any function $f$ on $\Theta$ and any $J\subset J_o$
\begin{equation}
  \label{eq:not}
  f(J)=\sum_{j\in J}\pi_{j,n} f\left(\theta_{j,n}\right).
\end{equation}
In what follows the
  subscript $n$ is fixed
  and thus skipped in the $\theta_j$'s, $\pi_j$'s and $\eps_s$'s. Recall that the collection of distinct ultrametric balls
  $J=\{k:\fs(k,j)\le s\}$ for $j$ varying in $J_o$ and $s$ in
  $\{0,\ldots,\sm\}$ form a tree $\T$.  For each distinct $J$, we picked an arbitrary
  $j\in J$ and set $\theta_J=\theta_j$. Set also for short 
\[\pi(J)=\sum_{j\in J}\pi_j.\]
 Let $f$ be $1$-Lipschitz on $\Theta$. We first prove by recurrence
  that for any vertex $J$ of the tree,
\begin{equation}
\label{eq:recfj}
 f(J)\lec \pi(J)f(\theta_J)+\max_{I\in\mathrm{Desc}(J)}\eps_{\vs(I)}\eps_{s(p(I))}.
\end{equation}
If $J$ has $\fs$-diameter zero, then $f(J)=\pi(J)f(\theta_J)$ and
\eqref{eq:recfj} is satisfied. Next, if $J$ has children $I$ that satisfy \eqref{eq:recfj}, we compute 
\begin{eqnarray*}
   f(J) &=& \sum_{I\in \C{J}}f(I)\\
   &\lec & \sum_{I\in \C{J}}\left[\pi(I)f(\theta_I)+\max_{H\in \D{I}}\eps_{\vs(H)}\eps_{\fs(p(H))}\right]\\
&\le & \pi(J)f(\theta_J) +\!\!\!\!\sum_{I\in
  \C{J}}\!\!\!|\pi(I)|\underbrace{|f(\theta_I)-f(\theta_J)|}_{\le
    |\theta_I-\theta_J|} +\!\!\!\max_{H\in
    \D{I}}\eps_{\vs(H)}\eps_{\fs(p(H))}.\\
\end{eqnarray*}
Since $|\pi(I)|$ is of order $\eps_{\vs(I)} $ and
$|\theta_I-\theta_J|$ is of order $\eps_{\fs(J)}$ we see that \eqref{eq:recfj} holds for $J$ and in particular for $J_o$
for which we have $\pi(J_o)=0$.

To prove the reverse inequality, let $J\subsetneq J_o$ such that $\eps_{\vs(J)}\eps_{\fs(p(J))}$ is
maximal. Set $\fe(J)=\min_{j\notin J}|\theta_{j}-\theta_J|$ which is
bigger than the $\fs$-diameter of $J$ and consider a $1$-Lipschitz
function $f$ on $\Theta$ such that 
$f(J)=0$ and $ f(J_o\setminus J)=|\pi(J)|[\fe(J)-\fs(J)]$, for instance
\[f(\theta)=-\mathrm{sgn}(\pi(J))\times\min\{\fe(J)-\fs(J),[|\theta-\theta_J|-\fs(J)]_+\}\] 
which satisfies 
$f(\theta_j)=-\mathrm{sgn}(\pi(J))[\fe(J)-\fs(J)]\1_{j\notin J}$. We
get  
\[f(J_o) = f(J_o\setminus J)+f(J)= |\pi(J)|[\fe(J)-\fs(J)]\]
and since  $|\pi(J)|$ is of order $\eps_{\vs(J)}$ and $\fe(J)$
is of order $\eps_{\fs(p(J))}$ at least, we deduce
\begin{equation*}
 f(J_o)\gec\max_{I\in
    \D{J_o}}\eps_{\vs(I)}\eps_{\fs(p(I))}.
\end{equation*}  
It remains to note that  (by the Kantorovich-Rubinstein Theorem)
\begin{equation*}
  \W\left(G_{1,n},G_{2,n}\right)=\sup_{\Vert f\Vert_{\text{Lip}}\le 1}\int_\Theta f(\theta)dG_n(\theta)=\sup_{\Vert f\Vert_{\text{Lip}}\le 1}f(J_o).
\end{equation*}

\subsection{Proof of Lemma~\ref{lemrec}}
\label{aplemrec}
Recall that we set $\pi(J)=\sum_{j\in J}\pi_j$. We use definition
\eqref{eq:not} for $f(\theta)=F(x,\theta)$. If $J$ satisfies
$\fs(J)=0$, then all the $\theta_j$ for $j\in J$ are equal and
$F(x,J)=\pi(J)F(x,\theta_J)$. In this case, the choice $\eta_{k,J}=\pi(J)\1_{\{k=0\}}$ and  $R(x,J)=0$ work. 

\emph{Assume now that lemma~\ref{lemrec} holds for any vertex $I$ with
  parent $J$ in the tree $\T$.}   We write a Taylor expansion to pass the
estimates of $I$ to the parent $J$. By assumption,
    \begin{equation}
      \label{eq:FxI}
       F(x,I)  =  \sum_{\ell=0}^{2m} \eta_{\ell,I} \eps_{\fs(I)}^\ell  F^{(\ell)}(x, \theta _I) + R(x,I). 
    \end{equation}
    Assuming without loss of generality that $\theta_J\le \theta_I$, we apply
 Taylor's formula with remainder to $F^{(\ell)}(x, \theta _I)$ at $\theta_J$ and obtain
 \[F^{(\ell)}(x, \theta_I)-\!\!\sum_{k=\ell}^{2m-1}\frac{(\theta_I-\theta_J)^{k-\ell}}{(k-\ell)!}
F^{(k)}(x,\theta_J) = \!\! \int_{\theta_J}^{\theta_I}\!\! \frac{(\theta_I-\xi)^{2m-1-\ell}}{(2m-1-\ell)!}F^{(2m)}(x, \xi)d\xi . \] 
So that 
\begin{align*}
F^{(\ell)}(x, \theta_I)-&\sum_{k=\ell}^{2m}\frac{(\theta_I-\theta_J)^{k-\ell}}{(k-\ell)!}
F^{(k)}(x,\theta_J)  \\
&= \int_{\theta_J}^{\theta_I} \frac{(\theta_I-\xi)^{2m-1-\ell}}{(2m-1-\ell)!}\left[F^{(2m)}(x, \xi)-F^{(2m)}(x,\theta_J)\right]d\xi\\ 
&=\frac{(\theta_I-\theta_J)^{2m-1-\ell}}{(2m-1-\ell)!}\, O\left(\sup_{\xi\in
    [\theta_J,\theta_I]}|F^{(2m)}(x,\xi)-F^{(2m)}(x,\theta_J)|\right)\\
&=o\left((\theta_I-\theta_J)^{2m-\ell}\right),
\end{align*}
where we used assumption \eqref{po} in the last equality. Setting now
\begin{equation}
\label{etalIhI}
 \alpha _{\ell,I} =  \eta_{\ell,I} \left( \frac{\eps_{\fs(I)}}{\eps_{\fs(J)}} \right)^\ell, \quad   h_I   =     \frac{\theta_I - \theta _J}{\eps_{\fs(J)}}, 
\end{equation}
we obtain
\begin{equation*}
F^{(\ell)}(x, \theta_I)=  \sum_{k=\ell}^{2m}\eps_{\fs(J)}^k\frac{h_I^{k-\ell}}{(k-\ell)!}F^{(k)}(x,\theta_J)+\eps_{\fs(J)}^{2m-\ell}o(1) ,
\end{equation*}
and substituting in \eqref{eq:FxI}, we get
\begin{align*}
  F(x,I) 
 = \sum_{k=0}^{2m} \eps_{\fs(J)}^k F^{(k)}(x, \theta_J)\sum_{\ell=0}^{k}
\alpha_{\ell,I}\frac{h_I^{k-\ell}}{(k-\ell)!}+ \eps_{\fs(J)}^{2m}\, \max_{\ell\le 2m}|\alpha_{\ell,I} | &\,o(1)\\
& + R(x,I).\\
\end{align*}
Adding up $F(x,I)$ over the children $I$ of $J$ gives \eqref{hyprec} i.e.
\begin{equation*}
  F(x,J)=\sum_{k=0}^{2m}\eta_{k,J} \eps_{\fs(J)}^k F^{(k)}(x, \theta_J)+R(x,J) ,
\end{equation*}
with 
\begin{equation}
\label{etakJ}
  \eta_{k,J}=\sum_{I\in \mathrm{Child}(J)}\sum_{\ell=0}^{k}
  \alpha_{\ell,I}\frac{h_I^{k-\ell}}{(k-\ell)!}
\end{equation}
and 
\begin{equation}
\label{RxJ}
  R(x,J)=\sum_{I\in \mathrm{Child}(J)}\left[\eps_{\fs(J)}^{2m}\, \max_{\ell\le 2m}|\alpha_{\ell,I} | \,o(1) + R(x,I)\right].
\end{equation}

\emph{We first prove \eqref{coeffsbornes} for $J$.} From
 \eqref{etakJ} for $k=0$ and \eqref{etalIhI} and recurrence hypothesis on $I$, we have 
\[\eta_{0,J}=\sum_{I\in \mathrm{Child}(J)}\alpha_{0,I}=\sum_{I\in \mathrm{Child}(J)}\eta_{0,I}=\sum_{I\in \mathrm{Child}(J)}\sum_{j\in I}\pi_j=\sum_{j\in J}\pi_j.\]
Moreover, since $|h_I|\lec 1$
for each child $I$ of $J$, Equation \eqref{etakJ} yields 
\[|\eta_{k,J}|\lec \max_{\substack{\ell\le k \\ I\in\mathrm{Child}(J)}} |\alpha_{\ell,I}|.\]
 Furthermore, from \eqref{etalIhI} we have $|\alpha_{\ell,I}| \le |\eta_{\ell,I}|$ since $\eps_{\fs(I)}\le
 \eps_{\fs(J)}$. By assumption on $I$, we have $|\eta_{\ell,I}|\lec 1$ so that
 $|\alpha_{\ell,I}|$ and thus $|\eta_{k,J}|$ are $O(1)$ and \eqref{coeffsbornes} is established. 

 \emph{We turn to the proof of \eqref{premierscoeffs}.} The first step is to show that
\begin{equation}
  \label{eq:applilm}
  \max_{k<d_J}|\eta_{k,J}|\asymp \max_{\substack{\ell< d_I \\ I\in \mathrm{Child}(J)}} |\alpha_{\ell,I}|.
\end{equation}
To this end, note that  \eqref{etalIhI} gives, for any two distinct
children $I$ and $I'$ of~$J$:
\begin{equation}
  \label{sep}
|h_{I} - h_{I'}| = \eps_{\fs(J)}^{-1} |\theta_{I} - \theta _{I'}|\asymp 1.
\end{equation}
The finite set $\{h_I\}_{I\in\C{J}}$ is thus $\eps$-separated for some $\eps>0$. Hence,  if we set $\Lambda=\left(\alpha_{\ell,I}\right)_{0\le \ell\le d_I-1}$, we get by Corollary~\ref{memenorme}
\[\max_{k<d_J}\left|\sum_{I\in \C{J}}\sum_{\ell=0}^{d_I-1}\alpha_{\ell,I}\frac{h_I^{k-\ell}}{(k-\ell)!}\1_{k\ge \ell}\right|\asymp \max_{\substack{\ell< d_I \\ I\in \C{J}}} |\alpha_{\ell,I}|.\]
Now, to obtain \eqref{eq:applilm}, we see from \eqref{etakJ} that it's enough to show 
\begin{equation}
  \label{eq:negl}
I\in \C{J}, k\ge d_I\Longrightarrow \sum_{\ell=d_I}^{k}\alpha_{\ell,I}\frac{h_I^{k-\ell}}{(k-\ell)!}=o\left(\max_{\ell< d_I} |\alpha_{\ell,I}|\right). 
\end{equation}
Since $|h_I|\lec 1$, we have 
\begin{equation}
  \label{eq:gro}
 \left|\sum_{\ell=d_I}^{k}\alpha_{\ell,I}\frac{h_I^{k-\ell}}{(k-\ell)!}\right|\lec \max_{d_I\le \ell\le k} |\alpha_{\ell,I}|.
\end{equation}
By assumption on $I$, we also have $\|\eta_I\|\asymp \max_{\ell< d_I} |\eta_{\ell,I}|$, so that
\begin{eqnarray*}
 \frac{\eps_{\fs(I)}}{\eps_{\fs(J)}}\cdot \max_{\ell< d_I} |\alpha_{\ell,I}| = \frac{\eps_{\fs(I)}}{\eps_{\fs(J)}}\cdot \max_{\ell< d_I} |\eta_{\ell,I}|\left(\frac{\eps_{\fs(I)}}{\eps_{\fs(J)}}\right)^\ell
& \gec & \|\eta_I\|\left(\frac{\eps_{\fs(I)}}{\eps_{\fs(J)}}\right)^{d_I} \\
&\ge & \max_{d_I\le \ell\le k} |\alpha_{\ell,I}| ,
\end{eqnarray*}
where the last inequality comes from \eqref{etalIhI}. Thus,  
\begin{equation}
  \label{eq:pto}
 \max_{d_I\le \ell\le k} |\alpha_{\ell,I}|= o\left(\max_{\ell< d_I} |\alpha_{\ell,I}|\right) ,
\end{equation}
so that \eqref{eq:gro} and \eqref{eq:pto} yield \eqref{eq:negl} and \eqref{eq:applilm} is proved. 

The second step is to prove 
\begin{equation}
  \label{equi}
 \|\eta_J\|\asymp \max_{k< d_J} |\eta_{k,J}|. 
\end{equation}
The non-trivial part is $\|\eta_J\|\lec \max_{k< d_J}
|\eta_{k,J}|$. By the definition \eqref{etakJ} of $\eta_{k,J}$,
\eqref{eq:pto} and \eqref{eq:applilm}, we have 
\begin{eqnarray*}
\max_{k\ge d_J} |\eta_{k,J}|  &\le  &  \max_{k\ge d_J}\sum_{I\in
                                      \C{J}}\max_{\ell\le k}
                                      |\alpha_{\ell,I}|\\
& \lec  &  \sum_{I\in \mathrm{Child}(J)}\max_{\ell<d_I} |\alpha_{\ell,I}| \lec   \max_{\substack{\ell< d_I \\ I\in \mathrm{Child}(J)}} |\alpha_{\ell,I}| \lec  \max_{k< d_J} |\eta_{k,J}|.
\end{eqnarray*}
The proof of \eqref{premierscoeffs} is complete.

\emph{We turn to the proof of \eqref{borneinfcoeff}.} From \eqref{equi},
\eqref{eq:applilm} and \eqref{etalIhI}, we get 
\begin{eqnarray}
  \|\eta_J\| \gec \max_{k< d_J} |\eta_{k,J}| \gec \max_{\substack{\ell< d_I \\ I\in \mathrm{Child}(J)}}|\alpha_{\ell,I}|
&\gec &\max_{\substack{\ell< d_I \\ I\in \mathrm{Child}(J)}}|\eta_{\ell,I}|\left(\frac{\eps_{\fs(I)}}{\eps_{\fs(J)}}\right)^\ell\nonumber\\
&\gec &\max_{I\in\mathrm{Child}(J)}\|\eta_I\|\left(\frac{\eps_{\fs(I)}}{\eps_{\fs(J)}}\right)^{d_I-1}.\label{mino}
\end{eqnarray}
Let $H$ be a descendant of $I$, a child of $J$. 
Assumption \eqref{borneinfcoeff} on $I$ then yields
\begin{eqnarray*}
  \|\eta_J\| \gec
  \|\eta_I\|\left(\frac{\eps_{\fs(I)}}{\eps_{\fs(J)}}\right)^{d_J-1} &\gec&
\eps_{\vs(H)}\left(\frac{\eps_{\fs(p(H))}}{\eps_{\fs(I)}}\right)^{d_I-1}\left(\frac{\eps_{\fs(I)}}{\eps_{\fs(J)}}\right)^{d_J-1}\\
&\gec& \eps_{\vs(H)}\left(\frac{\eps_{\fs(p(H))}}{\eps_{\fs(J)}}\right)^{d_J-1}.\\
\end{eqnarray*}
Moreover \eqref{coeffsbornes} implies 
\[\|\eta_J\|\gec |\eta_{0,J}|=|\pi(J)|\asymp \eps_{\vs(J)}.\]
Similarly, from \eqref{eq:applilm} and \eqref{coeffsbornes} for $I$,
\[\|\eta_J\|\gec |\alpha_{0,I}|=|\eta_{0,I}|=|\pi(I)|\asymp \eps_{\vs(I)},\]
so that \eqref{borneinfcoeff} is established for $J$.

\emph{We finally prove \eqref{remaind}.} From \eqref{RxJ}, \eqref{eq:pto}, assumption \eqref{remaind} for $I$ and \eqref{eq:applilm}, we have 
\begin{eqnarray*}
  R(x,J) &\lec & \max_{I\in\mathrm{Child}(J)}\left[\eps_{\fs(J)}^{2m}\, \max_{\ell\le
  2m}|\alpha_{\ell,I} | \,o(1) + R(x,I)\right]\\
 &\lec & \max_{I\in\mathrm{Child}(J)}\left[\eps_{\fs(J)}^{2m}\max_{\ell<d_I}|\alpha_{\ell,I} | \,o(1) + o\left(\|\eta_I\|\eps_{\fs(I)}^{2m}\right)\right]\\
 &\lec &\eps_{\fs(J)}^{2m}\|\eta_J\| \,o(1)+\max_{I\in\mathrm{Child}(J)}o\left(\|\eta_I\|\eps_{\fs(I)}^{2m}\right),
\end{eqnarray*}
and in addition, for each child $I$ of $J$, from \eqref{mino},
\begin{equation*}
  \|\eta_I\|\eps_{\fs(I)}^{2m}  = \|\eta_I\|\eps_{\fs(I)}^{d_I-1}\eps_{\fs(I)}^{2m+1-d_I}\lec  \|\eta_J\|\eps_{\fs(J)}^{d_I-1}\eps_{\fs(I)}^{2m+1-d_I}\le  \|\eta_J\|\eps_{\fs(J)}^{2m},
\end{equation*}
and we are done.

\bibliographystyle{imsart-nameyear}
\bibliography{minimax_mixtures_final}

%

\end{document}